\documentclass{amsart}
\usepackage[
top=30truemm,
bottom=30truemm,
left=30truemm,
right=30truemm]{geometry}
\usepackage{amsfonts, amssymb, amsmath, verbatim, amsthm, mathrsfs, amscd, enumerate, ascmac, fancyhdr, caption, hyperref, framed,ulem,subcaption,mathtools}
\usepackage{bbm}%mathbbm{1}
\usepackage{eucal}%fancy \mathcal
\usepackage{tikz}
\usetikzlibrary{shapes.geometric}
\usetikzlibrary{arrows.meta,arrows}
\usepackage{graphicx} % Required for inserting images
\usepackage{pgf, tikz}
\usepackage{pgfplots}
\pgfplotsset{compat=1.18}
\usepackage{float}
\usepackage{pdfpages}
\usepackage{tcolorbox}
\usepackage{thmtools}
\usepackage{enumitem}

\usepackage[framemethod=tikz]{mdframed}
\mdfsetup{linecolor=blue,backgroundcolor=gray!10,roundcorner=10pt,leftmargin=2pt,innerleftmargin=2pt,innerrightmargin=2pt}
\usepackage{url}

\usepackage{xcolor}
\usepackage{tikz}
\usetikzlibrary{positioning,intersections, calc, arrows,decorations.markings}

\newtheorem{theorem}{Theorem}[section]
\newtheorem{lemma}[theorem]{Lemma}
\newtheorem{proposition}[theorem]{Proposition}
\newtheorem*{theorem*}{Theorem}
\newtheorem{conjecture}{Conjecture}
\newtheorem*{conjecture*}{Conjecture}

\newtheorem*{question*}{Main question}

\newtheorem{corollary}[theorem]{Corollary}

\newtheorem*{claim*}{Claim}
\theoremstyle{definition}
\newtheorem{definition}[theorem]{Definition}
\newtheorem{example}[theorem]{Example}

\newtheorem*{goal*}{Goal}

\numberwithin{equation}{section}
\numberwithin{theorem}{section}

\newtheorem{remark}{Remark}

\numberwithin{remark}{section}

\DeclareMathOperator{\sinc}{sinc}

\usepackage{refcount}   % to get the number of a labeled theorem

\newenvironment{restatetheorem}[1]{%
  \par\medskip\noindent
  \textbf{Theorem \getrefnumber{#1} (in detail).}\itshape
}{%
  \par\medskip\normalfont
}

\usepackage{stackengine,scalerel}

\makeatletter
\def\@setauthors{%
  \begingroup
  \def\thanks{\protect\thanks@warning}%
  \trivlist
  \centering\footnotesize \@topsep30\p@\relax
  \advance\@topsep by -\baselineskip
  \item\relax
  \author@andify\authors
  \def\\{\protect\linebreak}

  \normalsize\lowercase{\authors}%
  
	\ifx\@empty\contribs
  \else
    ,\penalty-3 \space \@setcontribs
    \@closetoccontribs
  \fi
  \endtrivlist
  \endgroup
}
\def\@settitle{\begin{center}
\LARGE\lowercase{\@title}
  \end{center}%
}
\makeatother
\newcommand{\authoraddress}[1]{\address{\normalfont{#1}}}

\stackMath
\newcommand\reallywidehat[1]{%
\savestack{\tmpbox}{\stretchto{%
  \scaleto{%
    \scalerel*[\widthof{\ensuremath{#1}}]{\kern.1pt\mathchar"0362\kern.1pt}%
    {\rule{0ex}{\textheight}}
  }{\textheight}% 
}{2.4ex}}%
\stackon[-6.9pt]{#1}{\tmpbox}%
}
\makeatletter
\DeclareRobustCommand\widecheck[1]{{\mathpalette\@widecheck{#1}}}
\def\@widecheck#1#2{%
    \setbox\z@\hbox{\m@th$#1#2$}%
    \setbox\tw@\hbox{\m@th$#1%
       \widehat{%
          \vrule\@width\z@\@height\ht\z@
          \vrule\@height\z@\@width\wd\z@}$}%
    \dp\tw@-\ht\z@
    \@tempdima\ht\z@ \advance\@tempdima2\ht\tw@ \divide\@tempdima\thr@@
    \setbox\tw@\hbox{%
       \raise\@tempdima\hbox{\scalebox{1}[-1]{\lower\@tempdima\box
\tw@}}}%
    {\ooalign{\box\tw@ \cr \box\z@}}}
\makeatother

\def\eqalign#1{\null\,\vcenter{\openup\jot\mathsurround\dimen12
  \ialign{\strut\hfil$\textstyle{##}$&$\textstyle{{}##}$\hfil
      \crcr#1\crcr}}\,}

\def\supp{{\rm supp}}

\def\d{{\rm d}}

\def\R{\mathcal{R}}

\def\rd{\mathbb{R}^d}

\newcommand{\hd}{\dim_{\mathrm{H}}}
\newcommand{\fd}{\dim_{\mathrm{F}}}

\newcommand{\pd}{\dim_{\mathrm{P}}}
\newcommand{\lbd}{\underline{\dim}_{\textup{B}}}
\newcommand{\ubd}{\overline{\dim}_{\textup{B}}}

\title{Knapp-type obstructions in multilinear fractal Fourier extension}
\author{Itamar Oliveira}
\authoraddress{I. Oliveira, School of Mathematics, The Watson Building, University of Birmingham, Edgbaston, Birmingham, B15 2TT, England.}
\email{i.oliveira@bham.ac.uk, oliveira.itamar.w@gmail.com}
\thanks{IO was financially supported by EPSRC Grant EP/W032880/1. He is currently supported by his EPSRC Fellowship UKRI3285 \textit{New perspectives in phase-space Analysis and Fourier restriction}}

\author{Ana E. de Orellana}
\authoraddress{A. E. de Orellana, School of Mathematics and Statistics, University of St Andrews, St Andrews, KY16 9SS, Scotland.}
\email{aedo1@st-andrews.ac.uk}
\thanks{AEdO was financially supported by the University of St Andrews.}

\usepackage{fancyhdr}
\begin{document}

\subjclass[2020]{Primary: 28A80, 42B10; Secondary: 28A75, 28A78}
\begin{abstract}
% For curved smooth hypersurfaces, the classical Knapp example shows that the Stein--Tomas theorem about linear Fourier restriction estimates is sharp. Still in the smooth setting, variants of this example combined with the geometric notion of \textit{transversality} motivate the $L^{2}$-based multilinear Fourier extension conjecture.

% The classical Stein--Tomas extends to fractal measures exhibiting Fourier decay, as shown by .... It was only in the work of Hambrook and {\L}aba that the first cases of this extension were shown to be sharp by means of a `fractal' Knapp example involving Cantor-type constructions. Our main results are multilinear Knapp-type construction inspired by the works of Hambrook--{\L}aba and Chen for fractal measures; we obtain two necessary conditions for a multilinear fractal Fourier extension theorem to hold: one in terms of the upper box dimension of the measures' supports, and another one in terms of their Fourier decay and a ball condition. In particular, these conditions give a more restrictive range compared with previously known results whenever the convolution of the measures at play is singular.

% To complement our main theorem with results in the positive direction, we establish a multilinear Fourier extension estimate for measures whose convolution belongs to an $L^p$ space, which provides a rich class of examples of `transversal' self-similar measures through the work of Shmerkin and Solomyak.

For curved, smooth hypersurfaces, the classical Knapp example shows that the Stein--Tomas theorem, which gives linear Fourier restriction estimates, is sharp. Variants of this example combined with the geometric notion of \textit{transversality} motivate the $L^{2}$-based multilinear Fourier extension conjecture. In the fractal setting, work by Mockenhaupt, Mitsis, and Bak-Seeger extended the linear Fourier restriction estimate beyond the smooth setting, and subsequent work showed this extension to be sharp. In this article, we construct multilinear Knapp-type examples for fractal measures inspired by the works of Hambrook--{\L}aba and Chen. This yields two necessary conditions for a fractal multilinear Fourier extension estimate to hold: one in terms of the upper box dimension of the measures' supports, and another in terms of their Fourier decay and a ball condition. These conditions give a more restrictive range compared with previously known results whenever the convolution of the underlying measures is singular. In contrast, we complement this with a result in the positive direction by establishing a multilinear Fourier extension estimate for measures whose convolution lies in an $L^p$ space. This provides a rich class of examples of `transversal' self-similar measures through the work of Shmerkin and Solomyak.
\end{abstract}

\maketitle

\tableofcontents

\section{Introduction}

\subsection{Linear theory: smooth setting versus fractal setting} Given a smooth hypersurface $S\subseteq\mathbb{R}^{d}$ equipped with a measure $\sigma$, the \textit{linear Fourier extension problem} asks for which pairs $(p,q)$ one has
\begin{equation}\label{ineq1-050925}
    \| \widehat{g\d\sigma}\|_{L^{q}(\mathbb{R}^{d})}\lesssim \|g\|_{L^{p}(\mathrm{d}\sigma)},
\end{equation}
where 
\begin{equation}
     \widehat{g\d\sigma}(\xi)=\int_{S}g(x)e^{-2\pi ix\cdot\xi}\mathrm{d}\sigma(x)
\end{equation}
is the \textit{Fourier extension operator} associated to $\sigma$. This problem lies at the centre of a constellation of deep questions in Analysis, from the study of certain Fourier summability methods to Geometric Measure Theory and nonlinear dispersive PDEs. We refer the reader to the classical survey \cite{Tao-notes} for a more in-depth account of this problem (see also \cite{Dem,Mat15}). The range of exponents $(p,q)$ for which \eqref{ineq1-050925} holds is the content of \textit{Stein's restriction (extension) conjecture} (cf. Chapter IX of \cite{Steinbook2}):

\begin{conjecture}\label{restriction} If $S$ has nonvanishing Gaussian curvature, the inequality
\begin{equation}\label{rest1}
 \| \widehat{g\mathrm{d}\sigma}\|_{L^{q}(\mathbb{R}^{d})}\lesssim \|g\|_{L^{p}(\d\sigma)}
\end{equation}
holds if and only if $q>\frac{2d}{d-1}$ and $q\geq \frac{(d+1)}{d-1}p^{\prime}$, where $\sigma$ is the surface measure.
\end{conjecture}

Asymptotics of Bessel functions, combined with the nonvanishing Gaussian curvature hypothesis, imply a decay rate of $|\xi|^{-\frac{(d-1)}{2}}$ for $\widehat{\mathrm{d}\sigma}(\xi)$ and that $\widehat{\mathrm{d}\sigma}\notin L^{q}(\mathbb{R}^{d})$ if $q\leq\frac{2d}{d-1}$, hence the necessary condition $q>\frac{2d}{d-1}$ follows simply by plugging in $g\equiv 1$ on \eqref{rest1}. The other condition $q\geq \frac{(d+1)}{d-1}p^{\prime}$ follows by the classical \textit{Knapp example} (see \cite{Dem}). Conjecture \ref{restriction} has only been settled in the $d=2$ case (\cite{Fef1,Zyg}), but significant progress has been made in higher dimensions over the last five decades (see \cite{Stri1,Tomas,Bourg1,Guth1,WangWu} and the references therein).

The statement of Conjecture \ref{restriction} hints that \textit{curvature} has great influence in the behaviour of $\widehat{\mathrm{d}\sigma}$; indeed, if $S=\mathbb{R}\times\{0\}\subseteq\mathbb{R}^{2}$, then
$\widehat{\mathrm{d}\sigma}(x,y)$ is constant in $y$ and hence $\|\widehat{\mathrm{d}\sigma}\|_{L^{q}(\mathbb{R}^{d})}<\infty\Longrightarrow q=\infty$. We shall generically refer to an estimate such as
$$|\widehat{\mathrm{d}\sigma}(\xi)|\lesssim|\xi|^{-\rho},\qquad \rho>0,$$
as a \textit{Fourier decay rate} for the measure $\mathrm{d}\sigma$. The previous paragraph then leads us to the classical heuristics: \textit{curvature implies Fourier decay}.

In fractal geometry, the distinction between Fourier decay and lack thereof is essential. While `random' fractals behave more like curved manifolds due to their Fourier decay (see e.g. \cite{Sal51,FdO26}), arithmetically structured sets, like the middle third Cantor set often lack measures with Fourier decay, thus behaving more like flat surfaces (see \cite{LP22},\cite[Chapter~8]{Mat15}).

This way of capturing curvature in terms of decay of the Fourier transform is what lies at the heart of the following version of the Stein--Tomas restriction theorem.
\begin{theorem}[Mockenhaupt \cite{Moc00}, Mitsis \cite{Mit02}, Bak--Seeger \cite{BS11}]\label{thm:ST} Let $\mu$ be a finite, compactly supported, Borel measure on $\rd$ such that for some $\alpha,\beta>0$,
    \begin{equation*}
    \mu(B(x,r)) \lesssim r^{\alpha}
\end{equation*}
for all $x\in\rd$ and $r>0$, and
\begin{equation*}
    \big| \widehat{\d\mu}(\xi) \big|^{2} \lesssim |\xi|^{-\beta}
\end{equation*}
for all $\xi\in\rd$. Then for all $f\in L^2(\mu)$ and $q\geq 2+ 4\frac{d-\alpha}{\beta}$,
\begin{equation}\label{eq:ST}
    \|\widehat{f\d\mu}\|_{L^q(\rd)} \lesssim \|f\|_{L^2(\d\mu)}.
\end{equation}
\end{theorem}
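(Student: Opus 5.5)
By duality, \eqref{eq:ST} is equivalent to the restriction estimate $\|\widehat{h}\|_{L^2(\mu)}\lesssim \|h\|_{L^{q'}(\rd)}$. Via the $TT^*$ argument, this in turn is equivalent to
\begin{equation*}
\|h*\widehat{\mu}\|_{L^{q}(\rd)}\lesssim \|h\|_{L^{q'}(\rd)},
\end{equation*}
since $\|\widehat h\|_{L^2(\mu)}^2=\langle h*\widehat{\mu},h\rangle$ up to reflections and conjugations. I will therefore prove boundedness of convolution with $K=\widehat{\mu}$ from $L^{q'}$ to $L^q$. I will first handle the non-endpoint range $q>2+4\frac{d-\alpha}{\beta}$ by Stein interpolation with a dyadic decomposition. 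The endpoint will then follow from the Bak--Seeger refinement, namely complex interpolation in Lorentz spaces (or Bourgain's summation trick).

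First I decompose dyadically. Choose $\phi_0+\sum_{j\ge1}\phi_j=1$ with $\phi_j(\xi)=\phi(2^{-j}\xi)$ supported in $|\xi|\sim 2^j$, and set $K_j=\phi_j\widehat{\mu}$. For each $j$ I need two bounds.

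\textbf{$L^1\to L^\infty$ bound.} The Fourier decay hypothesis gives $\|K_j\|_\infty\lesssim 2^{-j\beta/2}$, hence $\|h*K_j\|_\infty\lesssim 2^{-j\beta/2}\|h\|_1$.

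\textbf{$L^2\to L^2$ bound.} Here $\|h*K_j\|_2\le \|\widehat{K_j}\|_\infty\|h\|_2$, and $\widehat{K_j}=\widecheck{\phi_j}*\mu$ up to reflection. Since $\widecheck{\phi_j}(x)=2^{jd}\widecheck{\phi}(2^jx)$ is a Schwartz function concentrated at scale $2^{-j}$, the ball condition gives
\begin{equation*}
|\widecheck{\phi_j}*\mu(x)|\lesssim 2^{jd}\sum_{k\ge0}2^{-kN}\mu\bigl(B(x,2^{k-j})\bigr)\lesssim 2^{j(d-\alpha)}.
\end{equation*}
Here the $r^\alpha$ bound is used for small radii, and finiteness of $\mu$ covers large radii. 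The summation over shells is the only place where care with Schwartz tails is needed.

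Interpolating the two bounds (Riesz--Thorin) gives
\begin{equation*}
\|h*K_j\|_{L^q}\lesssim 2^{j\left[(d-\alpha)\frac{2}{q}-\frac{\beta}{2}\left(1-\frac{2}{q}\right)\right]}\|h\|_{L^{q'}}.
\end{equation*}
The exponent is negative exactly when $q>2+\frac{4(d-\alpha)}{\beta}$. Summing the geometric series over $j$, together with the trivial bound for the compactly Fourier-supported piece $K_0$ (which is bounded and has bounded Fourier transform), yields the estimate for all $q$ strictly above the threshold.

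The main obstacle is the endpoint $q=2+4\frac{d-\alpha}{\beta}$, where the exponent vanishes and the dyadic sum diverges logarithmically. Here I would follow Bak--Seeger. Form the analytic family
\begin{equation*}
T_z=\sum_j 2^{j\gamma(z)}K_j
\end{equation*}
with $\gamma$ affine in $z$, chosen so that $T_z$ obeys the $L^1\to L^\infty$ bound when $\operatorname{Re} z=0$ and the $L^2\to L^2$ bound (via the same shell estimate, uniformly in $\operatorname{Im} z$) when $\operatorname{Re} z=1$. At the $L^2$ endpoint this requires a square-function or almost-orthogonality argument, since the pieces have essentially disjoint Fourier supports. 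This gives the endpoint by Stein's interpolation theorem. Alternatively, Bourgain's interpolation trick gives the restricted weak-type estimate at the endpoint, and real interpolation with the pairing structure of $TT^*$ then upgrades it to the strong-type $L^{q'}\to L^q$ bound. I expect the delicate point to be justifying the upgrade from restricted weak type to strong type, which is the genuinely new contribution of \cite{BS11} and which I would cite rather than reprove.
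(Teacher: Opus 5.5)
\textbf{The endpoint is not reached.} The paper does not prove this theorem; it quotes it from \cite{Moc00,Mit02,BS11}. Your argument for the open range $q>2+4\frac{d-\alpha}{\beta}$ is correct and is the standard Mockenhaupt--Mitsis proof: $TT^*$, the pieces $K_j=\phi_j\widehat{\mu}$ with $\|K_j\|_\infty\lesssim 2^{-j\beta/2}$ and $\|\widehat{K_j}\|_\infty\lesssim 2^{j(d-\alpha)}$, then Riesz--Thorin and a geometric series. The gap is the endpoint $q=2+4\frac{d-\alpha}{\beta}$. The statement includes it, and neither of your two sketches reaches it.

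\textbf{The analytic family fails on the $L^2$ line.} For $T_z=\sum_j 2^{j\gamma(z)}K_j$, the operator norm on that line is $\sup_x\bigl|\sum_j 2^{j\gamma(z)}\widehat{K_j}(x)\bigr|$ with $\operatorname{Re}\gamma=-(d-\alpha)$.
\begin{itemize}
\item The multipliers $\widehat{K_j}$ (essentially $\widecheck{\phi_j}*\mu$) are not disjointly supported. They all concentrate near $\supp\mu$, and after the weight $2^{-j(d-\alpha)}$ each one is still of size up to $O(1)$.
\item The disjointness you invoke is that of the supports of the $\phi_j$. It lives in the other variable. It is what makes the $L^1\to L^\infty$ line work, since bounded overlap gives $|\sum_j 2^{j\beta/2}\phi_j\widehat{\mu}|\lesssim 1$. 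It gives no almost orthogonality for $h\mapsto h*K_j$ on $L^2$.
\item Concretely, take surface measure $\sigma$ on $\mathbb{S}^{d-1}$, so $\alpha=\beta=d-1$, and a nonnegative radial $\phi$. Then $2^{-j}\widecheck{\phi_j}*\sigma(x)\to c_\phi=\int_{\mathbb{R}}\phi(te_1)\,\d t>0$ for every $x\in\mathbb{S}^{d-1}$.
\item Since $\gamma$ is affine and its real part changes between the two lines, $\gamma$ is real at some point of the $L^2$ line. There the multiplier grows like $\log(1/\mathrm{dist}(x,\mathbb{S}^{d-1}))$ and is unbounded, even though the endpoint theorem is true in this case.
\end{itemize}
Stein's analytic proof for the sphere works only because of the explicit family $(1-|x|^2)_+^{z}/\Gamma(z+1)$, which is a bounded function on the $L^2$ line. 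A general fractal $\mu$ has no such family.

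\textbf{The Bourgain route only gives weak type.} Bourgain's summation trick gives $TT^*:L^{q',1}\to L^{q,\infty}$. The pairing $\|T^*h\|_{L^2(\mu)}^2=\langle TT^*h,h\rangle$ then yields only $\|\widehat{f\d\mu}\|_{L^{q,\infty}}\lesssim\|f\|_{L^2(\d\mu)}$. Real interpolation against the estimates you have cannot improve this at the critical exponent. Those estimates form the segment $\{(1-1/q,1/q): q\geq q_c\}$ on the duality line, and the critical point is its endpoint. Interpolating with the strong bounds for larger $q$ (including the trivial $L^1\to L^\infty$ one) only reproduces the open range. Passing from weak type to $L^q$ is precisely what \cite{BS11} supplies (they in fact obtain the stronger Lorentz bound into $L^{q,2}$). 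It is not a routine final step.

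\textbf{What you have proved.} You have the theorem for $q>2+4\frac{d-\alpha}{\beta}$, with the endpoint resting on a citation of \cite{BS11}. That is how the paper treats it too. The proposal should say this outright rather than suggest that Stein interpolation of your family closes the endpoint.
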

The origin of this theorem dates back to the works of Stein (see Chapter VIII of \cite{Steinbook2}) and Tomas \cite{Tomas} in the special case where $\mu$ is the surface measure of the sphere $\mathbb{S}^{d-1}$. On the other hand, it was not until \cite{Moc00,Mit02} that the problem was considered specifically for fractal measures (see \cite{BS11,CFdO} for a more thorough account of the history of this problem and recent progress). To the best of our knowledge, the only known restriction estimates beyond $L^2$ are those obtained by interpolating \eqref{eq:ST} with the trivial $L^1\to L^\infty$, and the one obtained in \cite{Chen}, where it is assumed that the measure has a convolution power in some $L^p$ space.

The conditions in Conjecture~\ref{restriction} are known to be sharp by the Knapp example, where the idea is to capture the `flatness' of $S$ by considering the characteristic function of a small cap. In \cite{HL,HL16,FHR,Che16} it was proven that the range of Theorem~\ref{thm:ST} is also sharp when considering fractal measures (see Section~\ref{sec:MultiKnappV3}). What is more, in \cite{HL} the authors noted that \eqref{ineq1-050925} could not hold for any $q<\frac{2d}{D_{2}(\mu)}$, where $D_{2}(\mu)$ is the $L^2$ dimension of the measure $\mu$, see Section~\ref{sec:prelim}. Generalising their remark, a more restrictive condition was obtained for general measures in \cite{CFdO} by considering a family of dimensions called the Fourier spectrum. In Proposition~\ref{prop:topLid} we use a similar argument to the one in \cite{HL} together with dimensional bounds to give a necessary condition for multilinear Fourier extension estimates to hold for general measures.

\subsection{Multilinear theory: smooth setting versus fractal setting} Multilinear variants of Conjecture~\ref{restriction} arose naturally in Klainerman and Machedon's work on well-posedness of certain PDEs (see \cite{KM1,KM2,KM3}), and their impact in the study of Conjecture \ref{restriction} subsequently evidenced in \cite{TVV1}. On the other hand, the idea of taking advantage of some type of `multilinear gain' to approach problems in the realm of linear Fourier extension theory predates \cite{TVV1}; it appears in the works of Prestini \cite{Prestini}, Drury \cite{Drury} and Christ \cite{Christ} in the context of Fourier extension for curves in $\mathbb{R}^{n}$, and also in the works by Fefferman-Stein \cite{Fef1}, C\'ordoba \cite{Cordoba}, Carleson-Sj\"olin \cite{CarlesonSjolin} and Zygmund \cite{Zyg}.

Given $k\geq 2$ smooth hypersurfaces $S_{j}$ equipped with measures $\sigma_{j}$, $1\leq j\leq k$, the \textit{multilinear Fourier extension problem} asks for which tuples of exponents $(p_{1},\ldots,p_{k},q)$ one has

\begin{equation}\label{ineq1-060925}
    \left\|\prod_{j=1}^{k}\widehat{g_{j}\mathrm{d}\sigma_{j}}\right\|_{L^{q}(\mathbb{R}^{d})}\lesssim \prod_{j=1}^{k}\|g_{j}\|_{L^{p_{j}}(\d\sigma_{j})}.
\end{equation}
For simplicity, the statement `\textit{\eqref{ineq1-060925} holds for all $g_{j}\in L^{p_{j}}(\sigma_{j})$}' will be represented by the notation $\R^{\ast}_{\sigma_{1},\ldots,\sigma_{k}}(p_{1}\times\cdots\times p_{k}\rightarrow q)$.

A straightforward application of H\"older's inequality implies a certain range of estimates such as \eqref{ineq1-060925} under the assumption of Conjecture \ref{restriction}, and this argument gives the optimal answer to the previous question if no further assumptions on the $S_j$ are made. 

Exploiting geometric features of the set of hypersurfaces $S_j$ other than curvature often leads to estimates of the form \eqref{ineq1-060925} that are \textit{beyond} those implied by \eqref{rest1}. A very natural such feature is the following: we say that $k$ smooth hypersurfaces $S_{1},\ldots,S_{k}\subseteq\mathbb{R}^{d}$ are \textit{transversal} if, for some $c>0$,
\begin{equation}\label{deftrans}
    |v_{1}\wedge\ldots\wedge v_{k}|\geq c 
\end{equation}
for all choices $v_{1},\ldots,v_{k}$ of unit normal vectors to $S_{1},\ldots,S_{k}$, respectively. In other words, if the $k$-dimensional volume of the parallelepiped generated by $v_{1},\ldots,v_{k}$ is bounded below by some absolute constant for any choice of normal vectors $v_{j}$, then the hypersurfaces are transversal. Under this condition, the \textit{multilinear Fourier extension conjecture} predicts what estimates of the form \eqref{ineq1-060925} should hold (in the case $p_{i}=p_{j}$ for all $1\leq i,j\leq k$):
\begin{conjecture}[\cite{Benn1}]\label{generalklinearapr422} Let $k\geq 2$ and suppose that $S_1,\ldots,S_k\subseteq\mathbb{R}^{d}$ equipped with measures $\sigma_{1},\ldots,\sigma_{k}$, respectively, are transversal with everywhere positive principal curvatures. If $\frac{1}{q}<\frac{d-1}{2d}$, $\frac{1}{q}\leq\frac{d+k-2}{d+k}\frac{1}{p^{\prime}}$ and $\frac{1}{q}\leq\frac{d-k}{d+k}\frac{1}{p^{\prime}}+\frac{k-1}{k+d}$, then $\mathcal{R}^{\ast}_{\sigma_{1},\ldots,\sigma_{k}}(p\times\cdots\times p\rightarrow q/k)$.
\end{conjecture}
The curvature hypotheses in Conjecture \ref{generalklinearapr422} can be removed in some cases; for instance, when $k=d$ (see \cite{Benn1,MOl} for a detailed exposition on the literature and recent progress). Conjecture \ref{generalklinearapr422} was settled in three cases:
\begin{enumerate}[label=(\roman*)]
\item $k=2$ was proved by Tao in \cite{Tao1} up to the endpoint.
\item $k=d$ was settled by Bennett, Carbery and Tao, up to the endpoint, in \cite{BCT}.
\item $k=d-1$ was established by Bejenaru in \cite{Bej} up to the endpoint.
\end{enumerate}

\begin{remark}\label{rem2} A clear contrast between \eqref{ineq1-050925} and \eqref{ineq1-060925} is already seen when $k=d=2$; if $S_1=\mathbb{R}\times\{0\}\subseteq\mathbb{R}^2$ and $S_2=\{0\}\times\mathbb{R}\subseteq\mathbb{R}^2$ are the coordinate axes of $\mathbb{R}^2$ (equipped with the induced Lebesgue measure $\sigma_{1}$ and $\sigma_{2}$, respectively) then \eqref{ineq1-050925} can only hold (individually for each measure) if $q=\infty$ due to lack of curvature. On the other hand, it is straightforward to check that, since $S_1$ and $S_2$ are transversal in the sense of the definition above,
$$\|\widehat{g_{1}\mathrm{d}\sigma_{1}}\widehat{g_{2}\mathrm{d}\sigma_{2}}\|_{L^{2}(\mathbb{R}^{2})}\lesssim \|g_1\|_{L^{2}(\d\sigma_1)}\|g_2\|_{L^{2}(\d\sigma_2)},$$
i.e. $\mathcal{R}^{\ast}_{\sigma_{1},\sigma_{2}}(2\times 2\rightarrow 2)$ holds. The latter estimate is \textbf{not} obtainable directly through individual bounds on $\|\widehat{g_{j}\mathrm{d}\sigma_{j}}\|_{L^q(\mathbb{R}^2)}$, $j=1,2$, $\forall q\geq 1$.
\end{remark}

Remark \ref{rem2} raises a very natural question that is at the core of this manuscript. 

\begin{question*} In the fractal setting, under what conditions do multilinear Fourier extension estimates hold \textbf{beyond} those implied by linear ones?
\end{question*}

As evidenced by Remark \ref{rem2}, transversality plays a fundamental role when the underlying measures are supported in smooth manifolds. Measures with well-separated Fourier support give rise to wave packets with limited overlaps, leading to a gain unavailable in the linear theory. However, that geometric notion does not seem to have an immediate analogue in the fractal world (see \cite{Ferrante} for a related discussion). As we will see, Proposition \ref{thm:mainthm} indicates that $L^{p}$-regularity of the $k$-fold convolution $\mu_{1}\ast\cdots\ast\mu_{k}$ allows one to prove multilinear estimates beyond the span of linear ones, but our main results Theorems \ref{nec-sing} and \ref{HL-nec-sing} suggest that `fractal transversality' could still manifest itself even when $\mu_{1}\ast\cdots\ast\mu_{k}$ is singular. Example \ref{example1-22072026} exhibits a related phenomenon in the context of Fourier extension for smooth curves in $\mathbb{R}^{3}$.

By the Hausdorff--Young inequality, \eqref{ineq1-060925} can hold for $q\leq 2$ only if the convolution of the measures at play is absolutely continuous and with an $L^{q'}$ density (this imposes $q>2$ in the singular convolution case, see Figure \ref{figure:singularrange}). This is not usually a concern for hypersurfaces in the smooth setting: if $\sigma_{1}$ and $\sigma_{2}$ are two disjoint arcs of a smooth convex curve, then one can show that $\sigma_{1}\ast\sigma_{2}\ll\mathcal{L}^{2}$ and $\sigma_{1}\ast\sigma_{2}\in L^{\infty}(\mathbb{R}^{2})$ (see \cite{OeS14}), which is enough to prove bilinear Fourier extension estimates for $(f,g)\mapsto \widehat{f\mathrm{d}\sigma_{1}}\widehat{g\mathrm{d}\sigma_{2}}$ beyond those following from linear ones. Notice that these arcs are transversal in the sense of \eqref{deftrans}, therefore bilinear estimates of that kind follow from \cite{BCT}, for instance. In some cases one even has, for smooth manifolds, a \textit{characterisation} of some multilinear estimates in terms of absolute continuity of the underlying measures; see Theorem 1.1 of \cite{Bennett-Bez}. In general, given two smooth and transversal hypersurfaces $S_{1},S_2\subseteq \mathbb{R}^d$ equipped with measures $\sigma_{1}$ and $\sigma_{2}$, respectively, $\sigma_{1}\ast\sigma_{2}\ll\mathcal{L}^{d}$. That is, hypersurface transversality already accounts for the convolution being absolutely continuous. 

The question of absolute continuity of the convolution in the fractal setting is more delicate; if $\hd (\supp{(\mu_{1})}+\supp{(\mu_{2})})<1$, then $\mu_{1}\ast\mu_{2}$ is singular with respect to the Lebesgue measure, whereas finding conditions that guarantee $\mu_{1}\ast\mu_{2}\ll\mathcal{L}^{d}$ is difficult in general (see \cite{SS}). The canonical example of a fractal measure is the natural self-similar measure $\mu_r$ in the middle $(1 - 2r)$-Cantor set for some $0<r<\frac{1}{2}$. Studying the interaction between these measures in physical space suggests that a reasonable notion of `transversality' for two Cantor measures $\mu_{r_1}$ and $\mu_{r_2}$ could be the requirement that $r_1$ and $r_2$ are rationally independent, i.e. that they satisfy $\frac{\log r_1}{\log r_2}\notin\mathbb{Q}$; this heuristic, however, is not supported by the available evidence. As pointed out in \cite{NPS12}, there is a dense $G_\delta$ set of parameters $u\in\mathbb{R}$ such that, with $T_u(x) = ux$, $\mu_{1/3}*T_u\mu_{1/4}$ is singular despite the fact that $\frac{\log3}{\log4}\notin\mathbb{Q}$. Examples like this one show that defining a suitable notion of transversality in the fractal setting will need to take into account subtleties that are not present in classical Fourier restriction theory. Note that in this article we refer to `transversality' as the geometric condition defined in \eqref{deftrans}, which, to the best of our knowledge, is not related to transversality methods in fractal geometry, see e.g. \cite{BSS}.

The main objective of this paper is, in a few words, to present necessary conditions for an estimate such as $\R^{\ast}_{\mu_{1},\ldots,\mu_{k}}(p\times\cdots\times p\rightarrow q)$ to hold for measures $\mu_{1},\ldots,\mu_{k}$ under \textit{size} and \textit{Fourier decay} hypotheses (see Theorem~\ref{nec-sing}). %Additionally, we present a simple sufficient condition in higher dimensions that highlights the regularity of the $k$-fold convolution $\mu_{1}\ast\cdots\ast\mu_{k}$.

Building on the work of Hambrook--{\L}aba and Chen \cite{HL, Che16}, in Theorem~\ref{nec-sing} we give a necessary condition for multilinear Fourier extension estimates to hold in dimension $1$. The result we obtain here is more restrictive than \cite[Proposition 5.3]{Trainor} in many cases of interest, for instance whenever the sum of the Hausdorff dimensions of the supports of the measures involved is less than $1$, and it could be complemented by a singular convolution version of Proposition~\ref{thm:mainthm}. As shown in examples \ref{ex:estimateSingular} and \ref{example1-22072026}, multilinear Fourier extension estimates still hold for measures with singular convolution. We believe that it would be interesting to further explore the singular case and to obtain a wider variety of examples of multilinear estimate that do not follow directly from linear ones.

As a modest complement to our main result, we present a sufficient condition on the $k$-fold convolution $\mu_{1}\ast\cdots\ast\mu_{k}$ for which multilinear Fourier extension estimates hold. More precisely, in Proposition \ref{thm:mainthm} we show that such estimates hold whenever $\mu_{1}*\cdots*\mu_{k}$ is absolutely continuous with respect to $\mathcal{L}^{d}$ and is in some Lebesgue space. This slightly extends the result of \cite{Trainor} where the author considered the stronger assumption of an $L^{\infty}$ density.

In \cite[Example~5.2]{Trainor} the author exhibits a pair of self-similar measures with equal contraction ratios whose digit sets satisfy a condition that implies that their convolution is the Lebesgue measure in $[0,1]$. This naturally raises the question of whether weaker arithmetic or combinatorial conditions between the digit sets might imply that the convolution lies in some $L^p$ space for $p\neq\infty$. Rather than pursuing digit set conditions directly, we rely on the fractal geometry literature to give examples of measures that satisfy the conditions of Proposition~\ref{thm:mainthm} depending on their dimensional and Fourier analytic properties. These results allow us to give examples of measures for which multilinear Fourier extension estimates hold depending on the $L^q$ and Fourier dimensions of the measures involved (see Section~\ref{sec:prelim}).
\\
\\
\textbf{Structure of the paper.} Section \ref{sec:prelim} is dedicated to the preliminaries on fractal geometry and dimension theory. These concepts will be used to give examples of Proposition~\ref{thm:mainthm} via Corollaries~\ref{cor1} and \ref{cor2}, and counterexamples that give the necessary conditions for multilinear Fourier extension estimates to hold; see Theorems~\ref{nec-sing} and \ref{HL-nec-sing}. Section \ref{mainresults-310126} contains the statements of the main results, examples and remarks. In Section \ref{proof-main-abs-310126} we prove Proposition~\ref{thm:mainthm}. Finally, in Section~\ref{sec:MultiKnappV3} we prove Theorems~\ref{nec-sing} and \ref{HL-nec-sing}.
\\
\\
\textbf{Acknowledgments.} We thank Jonathan Bennett and Jonathan M. Fraser for many stimulating conversations, remarks, and suggestions during the preparation of this manuscript.

\section{Preliminaries}\label{sec:prelim}

In this section, we fix the notation and introduce concepts from Fractal geometry that will be used later in the paper.
\\
\\
\textbf{Notation.} Throughout the paper we write $A\lesssim B$ to indicate that there exists a universal constant $0<C<\infty$ such that $A\leq CB$. By $A\approx B$ we mean that both $A\lesssim B$ and $B\lesssim A$ hold. We write $[N]$, with $N\in\mathbb{N}$, for the set $\{0,\ldots, N-1\}$. and $|A|$ for the cardinality of a finite set $A$. Finally, for $p,q\in[1,\infty]$ we write $p',q'$ to refer to their H\"older conjugate exponents.
\\
\\
\textbf{Iterated Function Systems.} An iterated function system (IFS) on $\mathbb{R}$ is a finite collection $\boldsymbol\varphi = (\varphi_i)_{i=1}^m$ of contractive maps. We will only consider the case where these maps are linear similarities $\varphi_i = \lambda_ix +a_i$, for $\lambda_i,a_i\in \mathbb{R}$ and $|\lambda_i|<1$. When all $\lambda_i$ are equal we say that the IFS is homogeneous. It is well-known (see e.g. \cite[Theorem 9.1]{falconer}) that for any IFS $\boldsymbol\varphi$ there exists a unique non-empty compact set $K\subseteq\mathbb{R}$, known as the attractor (or self-similar set), such that
\begin{equation*}
    K = \bigcup_{i=1}^m \varphi_i(K).
\end{equation*}
We say that an IFS with attractor $K$ satisfies the strong separation condition if $\varphi_i(K)\cap\varphi_j(K)=\varnothing$ whenever $i\neq j$, and it satisfies the open set condition if there exists a non-empty open set $U$ such that $\cup_{i=1}^m \varphi_i(U)\subseteq U$ with the union being disjoint. Given a probability vector $(p_i)_{i=1}^m$, the self-similar measure determined by the IFS $\boldsymbol{\varphi}$ and $(p_i)_{i=1}^m$ is the unique Borel probability measure $\mu$ on $K$ satisfying
\begin{equation}\label{eq:IFSmeasure}
    \mu = \sum_{i=1}^{m} p_i\,\varphi_i\mu.
\end{equation}
In the homogeneous case and when all probabilities are equal $p_i = \frac{1}{m}$, we refer to $\mu$ as the natural self-similar measure of the attractor. For a more thorough presentation on iterated function systems and their dimensions we refer the reader to \cite{falconer}.
\\
\\
\textbf{Dimensions of Fractal Sets and Measures.} Throughout this article we will work with non-zero, finite, compactly supported, Borel measures on $\rd$.

Given $0<\alpha\leq d$, we say that a measure $\mu$ satisfies the Frostman condition with exponent $\alpha$ if for any $x\in\rd$ and $r>0$, $\mu(B(x,r))\lesssim r^\alpha$. This leads us to our first notion of fractal dimension of measures, the $L^\infty$-dimension, also known as Frostman dimension in the literature,
\begin{equation*}
    D_\infty(\mu) =\sup\big\{ \alpha\in[0,d] : \mu( B(x,r) )\lesssim r^\alpha,~~\forall r>0,x\in\rd \big\}.
\end{equation*}
As a consequence of Frostman's lemma (see e.g. \cite[Theorem~2.7]{Mat15}), if $D_\infty(\mu)>\alpha$, then for any $s<\alpha$ the $s$-energy
\begin{equation*}
    I_s(\mu) \coloneqq \iint \frac{\d\mu(x)\d\mu(y)}{|x-y|^s}
\end{equation*}
is finite. These energies play an important role in fractal geometry. They give rise to the definition of the $L^2$-dimension, which quantifies how a measure distributes mass on average
\begin{equation*}
    D_2(\mu) = \sup\{ s\in[0,d] : I_s(\mu)<\infty \}.
\end{equation*}
By Frostman's lemma, it is not difficult to see that these dimensions characterise the Hausdorff dimension of a set. Given $X\subseteq\rd$
\begin{equation*}
    \hd X = \sup\{ D_2(\mu) : \mu\text{ on }X \}.
\end{equation*}

A natural generalisation of the dimensions defined above is the $L^q$-dimensions, which capture the differences in the distribution of mass on the attractor. These dimensions are of particular interest for measures that exhibit multifractal behaviour, and are known for many particular examples. We refer the reader to \cite{Ols95} for more details. For $q>1$, the $L^q$-dimension of a measure $\mu$ is defined by
\begin{equation*}
    D_q(\mu) = \sup\Bigg\{s\in [0,d] : \int\bigg( \int  \frac{\d\mu(y)}{|x-y|^{s}}\bigg)^{q-1}\,\d\mu(x) <\infty \Bigg\}.
\end{equation*}
The $L^q$-dimensions were first introduced in the following `lower' form. For $q\geq0$ and $r>0$ let
\begin{equation*}
    C_{q}(\mu,r) = \int \mu\big( B(x,r) \big)^{q-1}\,d\mu(x).
\end{equation*}
Then for $q\neq 1$ the $L^q$-dimension of $\mu$ is
\begin{equation*}
    D_{q}(\mu) = \liminf_{r\to0} \frac{\log C_{q}(\mu,r)}{(q-1)\log r},
\end{equation*}
and for $q=1$,
\begin{equation*}
    D_{1}(\mu) = \liminf_{r\to0} \frac{\int \log\mu\big( B(x,r) \big)\,d\mu(x)}{\log r}.
\end{equation*}
The map $q\mapsto D_q(\mu)$ is non-increasing for $q\geq0$, and continuous except perhaps at $q=1$. The $L^q$-dimensions include many well known notions of dimension, e.g. $\lim_{q\to\infty} D_q(\mu)= D_\infty(\mu)$, and $D_0(\mu) = \lbd\supp(\mu)$, where $\lbd X$ is the lower box dimension of $X\subseteq\rd$, also defined as
 \begin{equation*}
     \lbd X = \liminf_{\delta\searrow0}\frac{\log N_\delta(X)}{-\log\delta},
 \end{equation*}
 where $N_\delta(X)$ is the smallest number of sets of diameter $\delta$ needed to cover $X$. The upper box dimension $\ubd X$ is defined likewise by changing  the above $\liminf$ for a $\limsup$.

For measures on homogeneous IFSs defined as in \eqref{eq:IFSmeasure} satisfying the open set condition, the above formula can be simplified to
\begin{equation*}
    D_q(\mu) =\frac{\log \sum_{i=1}^m p_i^q}{(q-1)\log \lambda}
\end{equation*}
for $q\neq1$, and 
\begin{equation*}
    D_1(\mu) = \frac{\sum_{i=1}^m p_i\log p_i}{\log \lambda}.
\end{equation*}

We will also make use of the packing dimension of a Borel set $X\subseteq\rd$, defined as
\begin{equation*}
    \pd X = \inf\left\{ \sup_i \ubd U_i : X\subseteq\bigcup_{i=1}^\infty U_i, \text{ $U_i$ bounded}\right\},
\end{equation*}
these dimensions satisfy $\hd X \leq \pd X \leq \ubd X$ and for any $\alpha$-H\"older map $f$ and $X\subseteq\rd$, $\pd f(X)\leq \alpha^{-1}\pd (X)$.

For $0<s< d$, it is easy to see by Parseval's theorem and the fact that as a distribution $\widehat{|\cdot|^{-s}}(\xi) = |\xi|^{s-d}$, that
\begin{equation*}
    I_s(\mu) \approx_{s,d} \int_{\rd} \big|\widehat{\d\mu}(\xi)\big|^2 |\xi|^{s-d} \,\d\xi
\end{equation*}
Since we are only considering finite measures, the region of integration in the above can be restricted to $|\xi|>1$. Thus, if $\widehat{\d\mu}$ decays polynomially, i.e. if $|\widehat{\d\mu}(\xi)|\lesssim |\xi|^{-t/2}$ for some $t>s$, $I_s(\mu)<\infty$. This motivates the definition of the Fourier dimension of a measure
\begin{equation*}
    \fd\mu = \sup\Big\{ s\in[0,d] : \sup_{\xi\in\rd} \big|\widehat{\d\mu}(\xi)\big|^2|\xi|^{s} <\infty \Big\},
\end{equation*}
and of a Borel set $X\subseteq\rd$
\begin{equation*}
  \fd X = \sup\{ s\in[0,d] : \exists \mu\text{ on }X : \fd\mu\geq s\}.
\end{equation*}

For a Borel set $X\subseteq\rd$, $\fd X\leq \hd X$. Sets that satisfy $\hd X = \fd X$ are called Salem sets. Also, in \cite{Mit02} the author showed that for any Borel measure $\mu$ on $\rd$, $\frac{\fd\mu}{2}\leq D_\infty(\mu)$.

\section{Main Results}\label{mainresults-310126}

Our main results are Knapp-type constructions that give necessary conditions for a fractal multilinear Fourier extension theorem to hold. In this section, we will present these constructions and complement them with a modest sufficient condition that generates a large number of interesting examples.

\noindent\textbf{A multilinear Knapp example for Cantor-type measures.} We establish a necessary condition for multilinear Fourier extension estimates to hold (see Figure~\ref{figure:singularrange}). The underlying construction should be viewed as a multilinear analogue of the fractal Knapp examples from \cite{HL,Che16}. We construct measures whose mass concentrates along transversal configurations (in some sense that will be made precise later) that favour multilinear gains, which is the new technical feature that we introduce (see Section~\ref{sec:MultiKnappV3}).
\begin{theorem}\label{nec-sing} Let $1 < p < \infty$ and $1\leq q<\infty$. Let $0<\beta_m\leq \alpha_m<1$ for $m=1,\ldots,k$ such that
    \begin{equation}\label{cond1-280126}
    \alpha_{j+1}-\frac{\beta_{j+1}}{2}\leq\alpha_{j}-\frac{\beta_{j}}{2},\quad\textnormal{for all} \quad 1\leq j\leq k-1,
\end{equation}
and
\begin{equation}\label{cond2-280126}
    \left(\alpha_{k}-\frac{\beta_{k}}{2}\right)+(k-1)\left(\alpha_{1}-\frac{\beta_{1}}{2}\right)<1.
\end{equation}
There exist $k$ Borel measures $\mu_{m}$ supported on compact sets $E_m$ for $m=1,\ldots, k$ satisfying
    \begin{equation}\label{Fdecay-270126}
        |\widehat{\d\mu_m}(\xi)|\lesssim |\xi|^{-\beta_m/2},
    \end{equation}
    and that for every $\varepsilon>0$, there is $0<r_{\varepsilon}<\frac{1}{4}$ with
    \begin{equation}\label{eq:ahlfors}
        r^{\alpha_m+\varepsilon}\lesssim \mu_{m}\big(B(x,r)\big) \lesssim r^{\alpha_m},\quad\forall\quad 0<r<r_{\varepsilon},\quad x\in E_{m},
    \end{equation}
    such that if
    \begin{equation*}
        q<\frac{2p(1-\sum_{m=1}^k\alpha_m) + p\sum_{m=1}^k\beta_m}{(p-1)\sum_{m=1}^{k}\beta_m},
    \end{equation*}
    then there exist $k$ sequences of functions $\{f_{\ell,m}\}_{\ell\in\mathbb{N}}$, $m=1,\ldots, k$, such that
    \begin{equation*}
    \frac{\|\prod_{m=1}^{k}\widehat{f_{\ell,m} \d\mu_m}\|_{L^q(\mathbb{R})}}{\prod_{m=1}^{k}\|f_{\ell,m}\|_{L^p(\d\mu_m)}} \to\infty \text{ as }\ell\to\infty.
    \end{equation*}
\end{theorem}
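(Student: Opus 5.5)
The plan is to run a Hambrook--{\L}aba/Chen-type construction \cite{HL,Che16} in parallel for the $k$ measures, with the structured pieces aligned to a \emph{common} arithmetic lattice. That way the large sets of the $k$ extensions coincide in frequency space, and the product is large on their common set.

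\textbf{Construction of the measures.} Each $\mu_m$ is a random Cantor-type measure built through a rapidly increasing sequence of scales $N_1\ll N_2\ll\cdots$.
\begin{itemize}
\item At generic stages $\mu_m$ is a randomised Cantor construction, choosing about $N^{\alpha_m}$ subintervals out of $N$ at each step.
\item The randomisation, via Salem-type large deviation estimates as in \cite{HL}, gives the decay $|\widehat{\d\mu_m}(\xi)|\lesssim|\xi|^{-\beta_m/2}$ of \eqref{Fdecay-270126}.
\item Uniform mass distribution gives the two-sided ball bounds \eqref{eq:ahlfors}, with an $\varepsilon$-loss coming from the sparse structured stages.
\end{itemize}

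At the $\ell$-th structured stage, with $\delta=\delta_\ell$ the current scale, we insert into one parent interval of $\mu_m$ a deterministic piece $P_{\ell,m}$.
\begin{itemize}
\item $P_{\ell,m}$ is a union of $M_{\ell,m}$ intervals of length $\delta$ whose left endpoints form an arithmetic progression of step $s_\ell$.
\item The step $s_\ell$ is the same for all $m$; only the lengths $M_{\ell,m}\approx \delta^{-\gamma_m}$ depend on $m$.
\item The exponents $\gamma_m$ are the largest permitted by \eqref{Fdecay-270126}. The decay bound at the lattice frequencies $\xi\approx j/s_\ell$ forces, roughly, $\mu_m(P_{\ell,m})\lesssim s_\ell^{\beta_m/2}$ relative to the parent mass. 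This leads to $\mu_m(P_{\ell,m})\approx \delta^{\alpha_m-\beta_m/2}\cdot(\text{power of }s_\ell)$.
\end{itemize}
This is where the combinations $\alpha_m-\beta_m/2$ of \eqref{cond1-280126}--\eqref{cond2-280126} enter. Condition \eqref{cond1-280126} orders the measures so that $\mu_1$ has the longest progression. Condition \eqref{cond2-280126} guarantees that the common step $s_\ell$ can be chosen so that all $k$ progressions fit inside their parent intervals simultaneously, that is $\delta\, M_{\ell,m}\lesssim$ parent length for every $m$, while staying compatible with the random stages.

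\textbf{Test functions and the lower bound.} Take $f_{\ell,m}=\mathbbm{1}_{P_{\ell,m}}$, so that $\|f_{\ell,m}\|_{L^p(\d\mu_m)}=\mu_m(P_{\ell,m})^{1/p}$. Because $P_{\ell,m}$ is essentially a progression convolved with a bump of width $\delta$, we get
\[
|\widehat{f_{\ell,m}\d\mu_m}(\xi)|\gtrsim \mu_m(P_{\ell,m})
\]
for all $\xi$ in
\[
\Omega_{\ell,m}=\bigcup_{|j|\lesssim s_\ell/\delta}\Big(\tfrac{j}{s_\ell}+\big[-\tfrac{c}{s_\ell M_{\ell,m}},\tfrac{c}{s_\ell M_{\ell,m}}\big]\Big).
\]
Since the step is common, these sets are nested, and their intersection is $\Omega_{\ell,1}$ (the longest progression). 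Hence
\[
\Big\|\prod_m\widehat{f_{\ell,m}\d\mu_m}\Big\|_{L^q}\gtrsim \prod_m\mu_m(P_{\ell,m})\,\big(\delta^{-1}M_{\ell,1}^{-1}\big)^{1/q},
\]
and the ratio in the statement is at least
\[
\prod_m\mu_m(P_{\ell,m})^{1-1/p}\,\big(\delta M_{\ell,1}\big)^{-1/q}.
\]
Substituting the choices of $M_{\ell,m}$ and $s_\ell$ and optimising the free exponent of $s_\ell$ relative to $\delta$, this is a power $\delta_\ell^{-\eta}$. A direct computation should show that $\eta>0$ exactly when
\[
q<\frac{2p(1-\sum_m\alpha_m)+p\sum_m\beta_m}{(p-1)\sum_m\beta_m}.
\]
Letting $\ell\to\infty$ then gives the blow-up.

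\textbf{Main obstacle.} The hard part is the first step: showing that inserting the common-step progressions at sparse scales preserves the uniform decay \eqref{Fdecay-270126} for all $\xi$, and not just at the lattice frequencies. One also has to check that the exponent bookkeeping really makes the extremal choice feasible for all $k$ measures at once, which is exactly what \eqref{cond1-280126}--\eqref{cond2-280126} should encode. For the decay, I would adapt Chen's argument \cite{Che16}: split $\widehat{\mu_m}$ into the contribution of the structured stage, bounded explicitly by the progression's Dirichlet kernel times the random-stage decay, and the contribution of the remaining random stages. The latter is controlled by Bernstein/Hoeffding bounds on a $\delta$-net of frequencies together with a union bound. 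The sparsity of the structured scales, $N_{\ell+1}\gg N_\ell$, absorbs the losses into the $\varepsilon$ in \eqref{eq:ahlfors}.
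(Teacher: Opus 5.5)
There is a genuine gap, and it comes before the decay question you flag. A \emph{single-scale} progression with polynomially many terms cannot live in a set satisfying the two-sided ball condition \eqref{eq:ahlfors}. To see this, pick $y_n\in E_m\cap J_n$ for every second interval $J_n$ of $P_{\ell,m}$. These $\approx M_{\ell,m}/2$ points are $s_\ell$-separated, so the balls $B(y_n,s_\ell/2)$ are disjoint. Each ball has mass $\gtrsim_\varepsilon s_\ell^{\alpha_m+\varepsilon}$. All of them lie in a ball of radius $\approx M_{\ell,m}s_\ell$ centred in $E_m$, whose mass is $\lesssim (M_{\ell,m}s_\ell)^{\alpha_m}$ (pass to a sub-progression if $M_{\ell,m}s_\ell$ is not small). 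Hence $M_{\ell,m}^{1-\alpha_m}\lesssim_\varepsilon s_\ell^{-\varepsilon}\le\delta^{-\varepsilon}$ for every $\varepsilon>0$, i.e.\ $M_{\ell,m}=\delta^{-o(1)}$. This rules out $M_{\ell,m}\approx\delta^{-\gamma_m}$ with $\gamma_m>0$.

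Making the structured stages sparse does not help. Condition \eqref{eq:ahlfors} is uniform in $r$, the structured stages occur at arbitrarily small scales, and the defect at scale $M_{\ell,m}s_\ell$ is a fixed power of $\delta$, not an $\varepsilon$-loss. By the same argument, every random step must also have branching number subpolynomial in the current scale. With admissible lengths you get $\mu_m(P_{\ell,m})\le\delta^{\alpha_m-o(1)}$, and then your bound $\prod_m\mu_m(P_{\ell,m})^{1-1/p}(\delta M_{\ell,1})^{-1/q}$ only blows up for $q<p'/\sum_m\alpha_m$. That is Trainor's condition, a strictly smaller range than the stated one whenever $\sum_m\alpha_m<1$.

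The missing idea is Chen's: spread the progression over \emph{all} scales. At every stage $N$ the paper places in $[\psi(N)]$ a progression $W_{N,m}$ of length $\tau_{N,m}\approx\psi(N)^{\alpha_m-\beta_m/2}=\Psi(N)^{o(1)}$. Each stage therefore costs only the subpolynomial losses in \eqref{eq:ChensAhlfors}. Meanwhile $F_{\ell,m}$ consists of $\prod_{i\le\ell}\tau_{i,m}\approx\Psi(\ell)^{\alpha_m-\beta_m/2}$ intervals of length $\Psi(\ell)^{-1}$ and has mass $\approx\Psi(\ell)^{-\beta_m/2}$. Decay is inherited from Chen, since it depends only on $\tau_{N,m}$ and $t_{N,m}$.

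The extension of $f_{\ell,m}\,\mathrm{d}\mu_m$ is then a product of $\ell$ Dirichlet kernels, and the paper avoids pointwise analysis. It expands the $L^{2r}$ norm and keeps only the terms giving $\widehat K(0)$. It then applies Cauchy--Schwarz against the number of distinct digit sums, bounded stage by stage by $|W'_{i,1}+\cdots+W'_{i,k}|$, and compares with $L^\infty$. The $M_N$-linearly independent steps $d_{N,m}$ are exactly what require \eqref{cond2-280126}.

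Your aligned-lattice choice fits into this framework and is not itself the problem. With equal steps the digit sumsets have size $\approx r\sum_m\tau_{i,m}$ instead of $\prod_m r\tau_{i,m}$. The same count then gives blow-up for $q<\frac{2p(1-\alpha_1)+p\beta_1}{(p-1)\sum_m\beta_m}$, a range that contains the stated one. Your displayed bound gives the same threshold once you take $\mu_m(P_{\ell,m})\approx\delta^{\beta_m/2}$ and $M_{\ell,1}\approx\delta^{-(\alpha_1-\beta_1/2)}$. (The binding decay constraint is at $|\xi|\approx 1/\delta$, where $|\widehat{f_{\ell,m}\mathrm{d}\mu_m}|\approx\mu_m(P_{\ell,m})$, not at $\xi\approx 1/s_\ell$.) So your claim that $\eta>0$ exactly at the stated threshold does not match the computation you outline, and neither does your account of where \eqref{cond2-280126} enters.
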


The following result differs from Theorem~\ref{nec-sing} in that the measures constructed are supported on Salem sets, and requires the dimensions $\alpha_m$ to be of the form $\alpha_m = \log _Nt$ for $t,N\in\mathbb{N}$, however, it relaxes \eqref{eq:ahlfors} to a Frostman condition.
\begin{theorem}\label{HL-nec-sing} Let $1 < p < \infty$ and $1\leq q<\infty$. For each $m=1,\ldots,k$ let $0<\alpha_k<\cdots <\alpha_{1}<1$ such that $\alpha_m = \frac{\log t_{0,m}}{\log N_0}$ for some $t_{0,m},N_0\in\mathbb{N}$, and $(k-1)\alpha_1 + \alpha_k <2$. There exist $k$ Borel measures $\mu_{m}$ supported on compact sets $E_m$ for $m=1,\ldots,k$ satisfying
    \begin{equation*}
        |\widehat{\d\mu_m}(\xi)|\lesssim |\xi|^{-\beta_m/2}
    \end{equation*}
    for every $\beta_m<\alpha_m$, and
    \begin{equation*}
        \mu_m\big(B(x,r)\big) \lesssim r^{\alpha_m}
    \end{equation*}
    for $x\in E_m$ and $r>0$, such that if 
    \begin{equation*}
        q<\frac{p(2-\sum_{m=1}^k\alpha_m)}{(p-1)\sum_{m=1}^{k}\alpha_m},
    \end{equation*}
    then there exist $k$ sequences of functions $\{f_{\ell,m}\}_{\ell\in\mathbb{N}}$, $m=1,\ldots,k$, such that
    \begin{equation*}
    \frac{\|\prod_{m=1}^{k} \reallywidehat{f_{\ell,m}\d\mu_m}\|_{L^q(\mathbb{R})}}{\prod_{m=1}^{k}\|f_{\ell,m}\|_{L^p(\d\mu_m)}} \to\infty \text{ as }\ell\to\infty.
    \end{equation*}
\end{theorem}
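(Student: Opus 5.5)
The plan is to adapt the Hambrook--{\L}aba construction of Salem-type random Cantor sets to $k$ measures built on a common base $N_0$. At scale $N_0^{-n}$ each $E_m$ will keep $t_{0,m}^n$ intervals. Inside the $m$-th construction we plant a deterministic ``arithmetic progression'' substructure at a sparse sequence of scales $n_\ell$, chosen so that its pieces are mutually aligned across $m$, and elsewhere we use randomly chosen digit sets. The randomness gives the Fourier decay $|\widehat{\d\mu_m}(\xi)|\lesssim|\xi|^{-\beta_m/2}$ for all $\beta_m<\alpha_m$, exactly as in \cite{HL}. The uniform mass distribution, $N_0^{-n\alpha_m}$ on each surviving interval of length $N_0^{-n}$, gives the Frostman bound $\mu_m(B(x,r))\lesssim r^{\alpha_m}$. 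The requirement $\alpha_m=\log t_{0,m}/\log N_0$ is what makes all the interval counts integers at a common scale. The hypothesis $(k-1)\alpha_1+\alpha_k<2$ is what leaves room for the aligned progressions inside the $N_0$-adic grid.

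At stage $\ell$ we let $f_{\ell,m}=\mathbbm{1}_{P_{\ell,m}}$, where $P_{\ell,m}$ is the union of the planted intervals of $E_m$ at the Knapp scale. These are an arithmetic progression of about $M_m$ intervals of length $\delta=N_0^{-n_\ell}$ with common gap $\tau$ (the same $\tau$ for all $m$), lying inside one interval of length $\approx M_m\tau$. We then have $\|f_{\ell,m}\|_{L^p(\d\mu_m)}=(M_m\delta^{\alpha_m})^{1/p}$. Up to phase, $\widehat{f_{\ell,m}\d\mu_m}$ is a product of a Dirichlet-type kernel of period $\tau^{-1}$ and a bump of width $\delta^{-1}$. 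On the set $S$ of points within $\approx(M_{\max}\tau)^{-1}$ of multiples of $\tau^{-1}$ in $|\xi|\lesssim\delta^{-1}$, we get $|\widehat{f_{\ell,m}\d\mu_m}|\gtrsim M_m\delta^{\alpha_m}$ for every $m$ simultaneously. The common $\tau$ is precisely what aligns these peaks; this is the multilinear feature. Hence
\begin{equation*}
\Bigl\|\prod_{m=1}^{k}\widehat{f_{\ell,m}\d\mu_m}\Bigr\|_{L^q}\gtrsim \prod_{m=1}^{k} M_m\delta^{\alpha_m}\cdot |S|^{1/q}, \qquad |S|\approx (\tau\delta)^{-1}(M_{\max}\tau)^{-1}.
\end{equation*}

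Next we choose the parameters: $\tau=\delta^{a}$ and $M_m=\delta^{-b_m}$. Full mass on the planted intervals, with $t_{0,m}$-adic branching between scales $\tau$ and $\delta$, forces $M_m\approx(\tau/\delta)^{\alpha_m}$ in the extremal configuration, as in \cite{HL}. Substituting and letting the powers of $\delta$ compete, the ratio blows up as $\delta\to0$ exactly when $q<\frac{p(2-\sum\alpha_m)}{(p-1)\sum\alpha_m}$ after optimising in $a$. The feasibility constraint $M_m\tau\le1$ at the optimum is where $(k-1)\alpha_1+\alpha_k<2$ enters. We will verify this exponent bookkeeping first, since it dictates which $a,b_m$ to plant.

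The main obstacle is showing that planting these deterministic progressions at sparse scales does not destroy the $|\xi|^{-\beta_m/2}$ decay or the Frostman bound. We follow \cite{HL}: the planted scales $n_\ell$ grow very fast, so the structured stages form a negligible fraction of all stages. The Fourier transform at frequency $|\xi|\approx N_0^{n}$ is then controlled by the random stages near $n$, using Bernstein/Hoeffding-type large deviation bounds for the random digit choices together with a union bound over a frequency net. Loss occurs only near $n_\ell$, and it costs just an $\varepsilon$ in the exponent, which is why the decay holds only for $\beta_m<\alpha_m$. To keep the Frostman bound exact, the planted progressions must use precisely $t_{0,m}$ children per step, which we arrange through the arithmetic of $N_0$.
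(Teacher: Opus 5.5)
Your Knapp object cannot exist inside the measures you describe. You take $f_{\ell,m}$ to be the indicator of a \emph{single} progression of $M_m\to\infty$ surviving intervals of length $\delta$ with gap $\tau\gg\delta$. You also assert that every surviving interval of length $N_0^{-n}$ has mass exactly $N_0^{-n\alpha_m}$. Choose $s=N_0^{-n}\in(\tau/N_0,\tau]$. The $M_m$ planted intervals lie in $M_m$ distinct surviving $s$-cells, each of mass $s^{\alpha_m}\ge(\tau/N_0)^{\alpha_m}$, all inside an interval of length at most $2M_m\tau$. The Frostman bound then forces $M_m(\tau/N_0)^{\alpha_m}\lesssim(M_m\tau)^{\alpha_m}$, i.e.\ $M_m^{1-\alpha_m}\lesssim N_0^{\alpha_m}$, so $M_m=O(1)$. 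With bounded $M_m$ your computation only gives blow-up for $q<p'/\sum_m\alpha_m$, the condition of Remark~\ref{Trainormult-rem-220126}, which falls short of the claimed range whenever $\sum_m\alpha_m<1$. This is exactly why \cite{HL} and the paper use an \emph{iterated} progression. A short progression $P_m\subseteq[N]$, $N=N_0^{2n_0}$, of length $t_m^{1/2}=N^{\alpha_m/2}$ is placed inside every structured parent at \emph{every} stage, and $f_{\ell,m}=1_{F_{\ell,m}}$ is the $\ell$-th level of this self-similar substructure. The structure is therefore not sparse, and the decay of $\widehat{\d\mu_m}$ cannot come from sparsity. It comes from the per-stage exponential-sum estimate of \cite{HL}: the planted progression is only $t_m^{1/2}$ of the $t_m$ digits, so its contribution is of the same order as the square-root cancellation of the random digits. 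A given $\beta_m<\alpha_m$ is then reached by taking $n_0$ large.

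With the iterated structure, for $|\xi|\ll N^{\ell}$, $\widehat{f_{\ell,m}\d\mu_m}(\xi)$ is essentially $N^{-\ell\alpha_m}$ times a product over $1\le j\le\ell$ of Dirichlet kernels of length $|P_m|$ evaluated at $d_m\xi N^{-j}$. It is not one kernel times a bump. Peak sets are therefore multiscale, and per-scale constant losses compound to $C^{\ell}$, which has to be beaten by taking $n_0$ large. The paper avoids pointwise analysis altogether. It uses $\|F\|_q^q\ge\|F\|_{2r}^{2r}/\|F\|_\infty^{2r-q}$ with $r=\lceil 1/\sum_m\alpha_m\rceil$, expands the $L^{2r}$ norm, keeps only the zero-sum terms, and bounds their number from below by Cauchy--Schwarz over the sumset. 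Because the ratios $d_m$ are $M$-LI, the digitwise pieces of that sumset have size exactly $\prod_m(r(|P_m|-1)+1)$. Fitting these LI ratios into $[N]$ (Lemma~\ref{lma:dmExist}) is where $(k-1)\alpha_1+\alpha_k<2$ is used, not a constraint such as $M_m\tau\le1$.

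Your bookkeeping also needs repair. There are $\approx\tau/\delta$ multiples of $\tau^{-1}$ in $|\xi|\lesssim\delta^{-1}$, so $|S|\approx(\delta M_{\max})^{-1}$. Moreover, optimising your ansatz $M_m\approx(\tau/\delta)^{\alpha_m}$ under $M_m\tau\le1$ alone would, already for $k=1$, produce blow-up inside the range of Theorem~\ref{thm:ST}. So the real limit on a planted progression comes from the decay requirement.

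The aligned-peaks idea is not itself the problem. Take iterated progressions with a common ratio (e.g.\ blocks of consecutive digits). Then all $k$ factors are large on the common multiscale set $\{|\xi|\le c^{\ell}N^{\ell}:\ \|\xi N^{-j}\|\le 1/(2|P_1|),\ 1\le j\le\ell\}$, which has measure $\gtrsim C^{-\ell}N^{\ell(1-\alpha_1/2)}$. The same computation then gives blow-up (for $n_0$ large) when $q<p'(2-\alpha_1)/\sum_m\alpha_m$, a range containing the stated one. That is a genuinely different design from the paper's $M$-LI construction, and in it the hypothesis $(k-1)\alpha_1+\alpha_k<2$ plays no role.
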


We write more precise versions of Theorems~\ref{nec-sing} and \ref{HL-nec-sing} in Section~\ref{sec:proofProp}, where we go into more detail about what the measures $\mu_m$ are, and we replace \eqref{eq:ahlfors} by a weaker condition.

The following result generalises the necessary condition from \cite{HL} from linear to multilinear estimates.
\begin{proposition}\label{prop:topLid}
    Let $\mu_1,\ldots,\mu_k$ be finite, compactly supported, Borel measures on $\rd$. If the estimate
    \begin{equation*}
        \Big\| \widehat{f_1\d\mu_1}\cdots\widehat{f_k\d\mu_k} \Big\|_{L^q(\rd)} \lesssim \prod_{m=1}^k \|f_m\|_{L^p(\d\mu_m)}
    \end{equation*}
    holds for some $p\in[1,\infty]$, then
    \begin{equation*}
        q\geq \frac{2d}{\sum_{m=1}^k \ubd\supp(\mu_m)}.
    \end{equation*}
\end{proposition}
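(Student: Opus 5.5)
Plug in $f_m\equiv 1$ for every $m$, so the right-hand side is the finite constant $\prod_m \mu_m(\rd)^{1/p}$ and the estimate forces $\prod_m \widehat{\d\mu_m}\in L^q(\rd)$. The goal is then a lower bound for $\int |\prod_m\widehat{\d\mu_m}|^q$ that is infinite whenever $q<2d/\sum_m s_m$, where $s_m>\ubd\supp(\mu_m)$ is arbitrary. This is the multilinear version of the argument of \cite{HL}. There, one lower-bounds the mass of $|\widehat{\d\mu}|^2$ on large balls by the $\delta$-scale energy, and then uses that the energy of a measure on a set with few $\delta$-balls is large.

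\textbf{Step 1: pass to the convolution.} Since $\prod_m\widehat{\d\mu_m}=\widehat{\d\nu}$ with $\nu=\mu_1*\cdots*\mu_k$, everything reduces to one measure. The measure $\nu$ is supported on $S=\supp\mu_1+\cdots+\supp\mu_k$. Covering each summand with $N_\delta(\supp\mu_m)\lesssim \delta^{-s_m}$ sets of diameter $\delta$ gives
\[
N_{k\delta}(S)\le \prod_m N_\delta(\supp\mu_m)\lesssim \delta^{-s},\qquad s:=\sum_m s_m .
\]
Only the case $s<d$ matters, since otherwise $2d/s\le 2\le q$ is automatic. In fact $q\ge 2$ must be checked separately; I expect it to follow from the same estimate, or else from the claimed bound being vacuous when $2d/s\le q$.

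\textbf{Step 2: lower bound at scale $\delta$.} Take a Schwartz function $\phi\ge 0$ with $\widehat\phi\ge 0$, $\widehat\phi\ge 1$ on $B(0,1)$, and $\supp\phi\subseteq B(0,1)$ (for instance $\phi=\psi*\tilde\psi$). Set $\phi_\delta(x)=\delta^{-d}\phi(x/\delta)$. By Parseval,
\[
\int |\widehat{\d\nu}(\xi)|^2\,\widehat{\phi}(\delta\xi)\,d\xi=\iint \phi_\delta(x-y)\,d\nu\,d\nu\gtrsim \delta^{-d}\sum_{Q}\nu(Q)^2 ,
\]
where the sum runs over a grid of $\delta$-cubes (this uses $\phi\gtrsim 1$ near $0$, after rescaling). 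By Cauchy--Schwarz, $\sum_Q\nu(Q)^2\ge \|\nu\|^2/N_\delta(S)\gtrsim \delta^{s}$, so the left side is $\gtrsim \delta^{s-d}$. The left side is essentially $\int_{|\xi|\lesssim\delta^{-1}}|\widehat{\d\nu}|^2$, up to Schwartz tails. To keep those tails controlled, use a compactly supported $\widehat\phi$ instead and handle positivity via $|\widehat{\d\nu}|^2$ against a mollified indicator; this technical point is where care is needed.

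\textbf{Step 3: Hölder.} For $q>2$,
\[
\int_{|\xi|\le R}|\widehat{\d\nu}|^2\le \|\widehat{\d\nu}\|_{L^q}^2\, R^{d(1-2/q)} .
\]
With $R=\delta^{-1}$ this gives $\delta^{s-d}\lesssim \|\widehat{\d\nu}\|_q^2\,\delta^{-d(1-2/q)}$ for all small $\delta$. Letting $\delta\to0$ forces $s-d\ge -d+2d/q$, that is $q\ge 2d/s$; letting $s_m\downarrow\ubd\supp\mu_m$ gives the claim. The case $q\le 2$ is handled the same way, with $\int_{|\xi|\le R}|\widehat{\d\nu}|^2\le \|\widehat{\d\nu}\|_\infty^{2-q}\|\widehat{\d\nu}\|_q^q$ bounded, which contradicts $\delta^{s-d}\to\infty$ when $s<d$.

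The main obstacle is Step 2: getting a genuine lower bound on the frequency-truncated $L^2$ mass from the positive-definite kernel while the truncation is sharp. I would handle it by choosing $\phi=|\check\eta|^2$ with $\eta$ a bump on $B(0,1/2)$. Then $\widehat\phi=\eta*\tilde\eta$ is supported in $B(0,1)$ and bounded, and $\phi(0)>0$ with $\phi$ continuous, so $\phi\gtrsim 1$ on a small ball. This gives $\int_{|\xi|\le\delta^{-1}}|\widehat{\d\nu}|^2\gtrsim \iint\phi_\delta(x-y)\,d\nu\,d\nu\gtrsim\delta^{-d}\sum_Q\nu(Q)^2$, using cubes of side $c\delta$ and the fact that the box-counting numbers change only by constants.
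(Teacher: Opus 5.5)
Where your argument applies it is correct. Its skeleton matches the paper's: test with $f_m\equiv1$, pass to $\nu=\mu_1*\cdots*\mu_k$, bound a quadratic quantity in $\widehat{\d\nu}$ from below by a dimension, and finish with H\"older. The key step is done differently.

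\textbf{Comparison with the paper.} The paper takes $s\in(D_2(\nu),d)$ and writes $\infty=I_s(\nu)\approx\int|\widehat{\d\nu}|^2|\xi|^{s-d}\,\d\xi$. It applies H\"older against the weight $|\xi|^{s-d}$ to get $q\ge 2d/D_2(\nu)$, and then quotes $D_2(\nu)\le D_0(\nu)=\lbd\supp\nu\le\sum_m\ubd\supp\mu_m$. You work at a single scale instead. The kernel $|\check\eta|^2$ gives $\int_{|\xi|\le\delta^{-1}}|\widehat{\d\nu}|^2\gtrsim\delta^{-d}\sum_Q\nu(Q)^2\gtrsim\delta^{-d}/N_\delta(S)$, and H\"older on the ball finishes.

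Your route is self-contained: it is in effect a one-scale proof of $D_2\le D_0$, with no Riesz-kernel identity and no $L^q$-spectrum facts. It is also no weaker. If you stop before Cauchy--Schwarz you recover the paper's intermediate bound $q\ge 2d/D_2(\nu)$, because $\sum_Q\nu(Q)^2$ is comparable to $\int\nu(B(x,\delta))\,\d\nu(x)$. Finally, it covers $q\le 2$ when $\sum_m\ubd\supp\mu_m<d$, which the paper's H\"older step (exponents $q/2$ and $q/(q-2)$) silently skips.

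\textbf{The gap.} The case $\sum_m\ubd\supp\mu_m\ge d$ with $q<2$ is not proved, though the paper's proof shares this gap, since it needs both $q>2$ and $D_2(\nu)<d$.

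Your assertion that $q\ge 2$ is ``automatic'' is false. In $d=1$, the measures $\mu_1=\mu_2=\mathbf{1}_{[0,1]}\,\d x$ satisfy the estimate with $p=2$, $q=1$, by Cauchy--Schwarz and Plancherel.

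Nor is the claim vacuous in this regime. Take $\mu_1$ with a $C_c^\infty$ density and $\mu_2$ the middle-thirds Cantor measure. The statement then asserts $q\ge 2/(1+\log 2/\log 3)>1$, yet $\widehat{\d\mu_1}\,\widehat{\d\mu_2}$ decays rapidly and lies in every $L^q$. So no argument using only $f_m\equiv 1$, yours or the paper's, can reach this case.

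\textbf{A fix covering all $p$ and $q$.} Randomise the test functions. Let $f_m=\sum_Q\epsilon_{m,Q}\mathbf{1}_Q$ over a grid of $\delta$-cubes, with independent random signs.
\begin{itemize}
\item Since $|f_m|\le 1$, the right-hand side of the estimate is $O(1)$.
\item For $|\xi|\le c\delta^{-1}$ there is no cancellation inside a cube, so $|\widehat{\mathbf{1}_Q\d\mu_m}(\xi)|\ge\mu_m(Q)/2$.
\item Hence $\mathbb{E}|\widehat{f_m\d\mu_m}(\xi)|^2\ge\frac14\sum_Q\mu_m(Q)^2\gtrsim\delta^{s_m}$, by Cauchy--Schwarz and $N_\delta(\supp\mu_m)\lesssim\delta^{-s_m}$.
\item Khintchine's inequality and independence in $m$ then give $\mathbb{E}\,\|\prod_m\widehat{f_m\d\mu_m}\|_{L^q}^q\gtrsim\delta^{-d+qs/2}$.
\end{itemize}
Letting $\delta\to 0$ forces $q\ge 2d/s$ for every $q$ and every $p$, and letting $s_m\downarrow\ubd\supp\mu_m$ gives the statement in full.
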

\begin{proof}
    Let $d>s>D_2(\mu_1*\cdots*\mu_k)$, H\"older's inequality with exponents $\frac{q}{2}$ and $\frac{q}{q-2}$ gives
    \begin{equation*}
        \infty = I_s(\mu_1*\cdots*\mu_k) \approx \int_{\rd} \big|\widehat{\d\mu_1}\cdots\widehat{\d\mu_k}(\xi)\big|^2 |\xi|^{s-d}\,\d\xi \leq \big\|\widehat{\d\mu_1}\cdots\widehat{\d\mu_k} \big\|_{L^q(\rd)}^2 \left( \int_{|\xi|\geq1} |\xi|^{\frac{q(s-d)}{q-2}}\,\d\xi \right)^{\frac{q-2}{q}},
    \end{equation*}
    where the last integral is finite if $q<\frac{2d}{s}$. Therefore, if a multilinear Fourier extension estimate holds then we must have $q\geq \frac{2d}{D_2(\mu_1*\cdots*\mu_k)}$. Since the $L^q$-dimensions are non-increasing in $q$
    \begin{equation*}
        D_2(\mu_1*\cdots*\mu_k) \leq D_0(\mu_1*\cdots*\mu_k) = \lbd \supp (\mu_1*\cdots*\mu_k) \leq \sum_{m=1}^k \ubd\supp(\mu_m).
    \end{equation*}
    Therefore, multilinear Fourier extension estimates hold only if
    \begin{equation*}
        q\geq \frac{2d}{D_2(\mu_1*\cdots*\mu_k)} \geq \frac{2d}{\sum_{m=1}^k \ubd\supp(\mu_m)},
    \end{equation*}
    as we wanted to prove.
\end{proof}
This condition (see Figure \ref{figure:singularrange}) can be thought of as the analogue of the condition $q>\frac{2d}{d-1}$ in Conjecture~\ref{generalklinearapr422}. Note that this `top lid' condition in the smooth setting does not include the endpoint, whereas Proposition~\ref{prop:topLid} does. In the smooth setting, the necessary condition follows from stronger assumptions on the measures that we do not have here (see \cite[Section~2.7]{TVV1}).

\begin{remark}\label{Trainormult-rem-220126} In Proposition 5.3 of \cite{Trainor}, Trainor gives the following necessary condition for a multilinear Fourier extension estimate to hold: let $\mu_{1},\ldots,\mu_{k}$ be compactly supported measures on $\mathbb{R}^{d}$ for which there exist sequences of points $\{x_{m,j}\}_{j=1}^{\infty}$ for each $m=1,\ldots,k$ and of radii $\{r_{j}\}_{j=1}^{\infty}$ such that $r_{j}\searrow 0$ and
\begin{equation}\label{Trainor-cond-V2-270126}
    \mu_{m}(B(x_{m,j},r_{j}))\approx r_{j}^{\gamma_{m}}.
\end{equation}
Under these conditions, if $\mathcal{R}^{\ast}_{\mu_{1},\ldots,\mu_{k}}(p\times\cdots\times p\rightarrow q)$ holds, then 
\begin{equation}\label{Trainor-cond-V2}
    q\geq \frac{dp'}{\left(\sum_{m=1}^{k}\gamma_{m}\right)}.
\end{equation}
\end{remark}

In order to compare the ranges from Theorem \ref{nec-sing} and Remark \ref{Trainormult-rem-220126} for $d=1$, let us establish a framework under which both results fit. Let $\mu_{1},\ldots,\mu_{k}$ be $k$ compactly supported measures in $\mathbb{R}$ such that
\begin{enumerate}[label=\textbf{(H\arabic*)}]
  \item \label{hyp:H1} $\mu_{m}$ satisfies the \textit{ball condition}: for $0\leq \alpha_{m} \leq 1$, $1\leq m\leq k$, given $\varepsilon>0$, there is $r_{\varepsilon}<1/4$ such that
  \begin{equation*}
        r^{\alpha_m+\varepsilon}\lesssim \mu_{m}\big(B(x,r)\big) \lesssim r^{\alpha_m},
\end{equation*}
for $x\in \textnormal{supp}{(\mu_{m})}$ and $0<r< r_{\varepsilon}$.
  \item \label{hyp:H2} $\mu_{m}$ has \textit{Fourier decay}: for $0\leq\beta_{m}\leq 1$, $1\leq m\leq k$,
  \begin{equation*}
        |\widehat{\mathrm{d}\mu_m}(\xi)|\lesssim |\xi|^{-\beta_m/2}.
    \end{equation*}
\end{enumerate}

Theorem~\ref{nec-sing} provides an example of $k$ measures $\mu_{1},\ldots,\mu_{k}$ satisfying \ref{hyp:H1} and \ref{hyp:H2}. On the other hand, any measure $\mu_{m}$ satisfying \ref{hyp:H1} also satisfies the hypothesis of \cite[Proposition~5.3]{Trainor} for $\gamma_{m}=\alpha_{m}$. A direct computation shows that, under \ref{hyp:H1} and \ref{hyp:H2}, Theorem~\ref{nec-sing} imposes more restrictive necessary conditions for $\mathcal{R}^{\ast}_{\mu_{1},\ldots,\mu_{k}}(p\times\cdots\times p\rightarrow q)$ to hold than \cite[Proposition~5.3]{Trainor} in the regime $\sum_{m=1}^k \alpha_m <1$ (see Figure \ref{figure:singularrange}). Note that \ref{hyp:H1} gives $\hd \textnormal{supp}{(\mu_{m})} = \pd \textnormal{supp}{(\mu_{m})} =\ubd\supp(\mu_m) = \alpha_{m}$ (see the proof of Theorem 5.7 of \cite{Mat12}), thus $\mu_{1}*\cdots*\mu_{k}$ is singular if $\sum_{m=1}^k \alpha_{m}<1$. On the other hand, in the setting of the upcoming Proposition~\ref{thm:mainthm} (where we need this convolution to be absolutely continuous) the necessary condition obtained in \cite{Trainor} will be more restrictive. We stress that \eqref{cond2-280126} is a technical assumption of the particular example exhibited in Theorem~\ref{nec-sing}, which is why it is not included in the framework above.

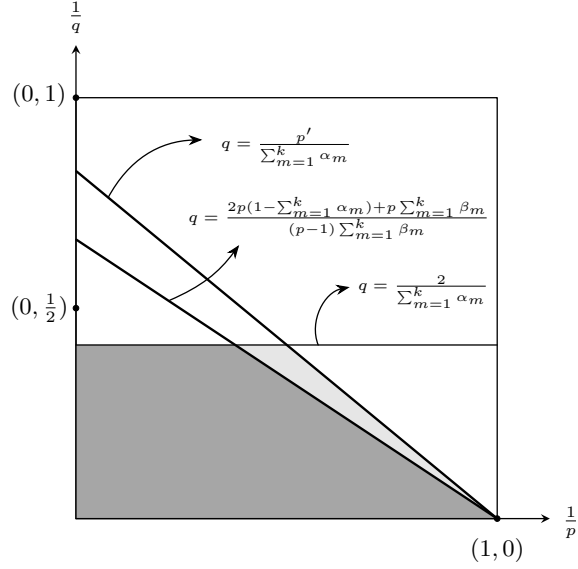
\begin{figure}[ht]
\centering
\begin{tikzpicture}[line cap=round,line join=round,>=Stealth,x=1cm,y=1cm, decoration={brace,amplitude=4pt}, scale=0.7]

\clip(-2.5,-1) rectangle (8.5,10);

%%%
%\filldraw[line width=0.5pt,color=black,fill=black,fill opacity=0.0] (-11,0) -- (-2,0) -- (-2,9) -- (-11,9) -- cycle;

\filldraw[line width=0.5pt,color=black,fill=black,fill opacity=0.0] (-1,0) -- (7,0) -- (7,8) -- (-1,8) -- cycle;

\fill[gray!20] (7,0) -- (3,3.3) -- (-1,3.3) -- cycle;
\fill[gray!70] (-1,0) -- (6.97,0) -- (2,3.3) -- (-1,3.3) -- cycle;

%%%

\filldraw[line width=0.5pt,color=black,fill=black,fill opacity=0.0] (10,0) -- (18,0) -- (18,8) -- (10,8) -- cycle;

\fill[gray!20] (18,0) -- (13.6,4) -- (10,4) -- cycle;
\fill[gray!70] (10,0) -- (17.97,0) -- (12.62,4) -- (10,4) -- cycle;

%%%

\begin{scope}
    \clip (-1.5,-1.5) rectangle (9,9.5);
    \draw [line width=0.5pt, -stealth] (-1,0)-- (8,0);
    \draw [line width=0.5pt, -stealth] (-1,0)-- (-1,9);
    \draw [line width=0.9pt] (7,0)-- (-.99,5.3);
    \draw [line width=0.9pt] (7,0)-- (-.99,6.6);
\end{scope}

\begin{scope}
    \clip (9.5,-1.5) rectangle (20,9.5);
    \draw [line width=0.5pt, -stealth] (10,0)-- (19,0);
    \draw [line width=0.5pt, -stealth] (10,0)-- (10,9);
    \draw [line width=0.9pt] (18,0)-- (10,6);
    \draw [line width=0.9pt] (18,0)-- (10,7.3);
\end{scope}

%%%

\draw [anchor=north] (-1.7,8.45) node {\small $(0,1)$};
\filldraw (-1,8) circle (1.5pt);
\draw [anchor=north] (-1.7,4.45) node {\small $(0,\tfrac{1}{2})$};
\filldraw (-1,4) circle (1.5pt);
\draw [anchor=north] (9.3,8.45) node {\small $(0,1)$};
\filldraw (10,8) circle (1.5pt);
\draw [anchor=north] (9.3,4.45) node {\small $(0,\tfrac{1}{2})$};
\filldraw (10,4) circle (1.5pt);
\draw [anchor=north] (7,-0.2) node {\small $(1,0)$};
\filldraw (7,0) circle (1.5pt);
%\draw [anchor=north] (2.9,-.4) node {\tiny $\boxed{\sum_{m=1}^{k}\alpha_{m}<1\textnormal{ and }\sum_{m=1}^{k}\beta_{m}<1}$};
\draw [anchor=north] (18,-0.2) node {\small $(1,0)$};
\filldraw (18,0) circle (1.5pt);
%\draw [anchor=north] (13.9,-.4) node {\tiny $\boxed{\sum_{m=1}^{k}\alpha_{m}<1\textnormal{ and }\sum_{m=1}^{k}\beta_{m}\geq 1}$};
\draw [anchor=north] (-1,10) node {\small $\frac{1}{q}$};
\draw [anchor=north] (10,10) node {\small $\frac{1}{q}$};
\draw [anchor=north] (8.4,0.4) node {\small $\frac{1}{p}$};
\draw [anchor=north] (19.4,0.4) node {\small $\frac{1}{p}$};

\draw [->, line width=0.5pt]
  (-.39,6.1) to[bend left=30] (1.4,7.2);
\draw [anchor=north] (3,7.6) node {\tiny $q=\frac{p'}{\sum_{m=1}^{k}\alpha_{m}}$};

\draw [->, line width=0.5pt]
  (10.61,6.8) to[bend left=30] (12.4,7.2);
\draw [anchor=north] (14,7.6) node {\tiny $q=\frac{p'}{(\sum_{m=1}^{k}\alpha_{m})}$};

\draw [->, line width=0.5pt]
  (.78,4.15) to[bend right=15] (2.1,5.2);
\draw [anchor=north] (4,6.3) node {\tiny $q=\frac{2p(1-\sum_{m=1}^k\alpha_m) + p\sum_{m=1}^k\beta_m}{(p-1)\sum_{m=1}^{k}\beta_m}$};

\draw [->, line width=0.5pt]
  (12.47,4.15) to[bend right=15] (13.8,5.2);
\draw [anchor=north] (15,6.3) node {\tiny $q=\frac{2p(1-\sum_{m=1}^k\alpha_m) + p\sum_{m=1}^k\beta_m}{(p-1)\sum_{m=1}^{k}\beta_m}$};

\draw [anchor=north] (5.6,4.8) node {\tiny $q=\frac{2}{\sum_{m=1}^{k}\alpha_{m}}$};

\draw [->, line width=0.5pt]
  (3.6,3.3) to[bend left=50] (4.2,4.4);

\draw [anchor=north] (16.6,4.8) node {\tiny $q=2$};

\draw [->, line width=0.5pt]
  (14.8,4) to[bend left=50] (16,4.6);

\draw [line width=0.5pt] (-1,3.3)-- (7,3.3);

\draw [dashed, line width=0.5pt] (10,4)-- (18,4);

\end{tikzpicture}
\caption{If $\sum_{m=1}^{k}\alpha_{m}<1$, the dark grey region represents the necessary conditions of Theorem \ref{nec-sing} intersected with $q\geq\frac{2}{\sum_{m=1}^{k}\alpha_{m}}$ from Proposition~\ref{prop:topLid}, since \ref{hyp:H1} implies $\ubd\supp(\mu_m) = \alpha_m$. Notice that the latter are more restrictive than the conditions from Proposition 5.3 of \cite{Trainor} (represented by the union of the dark and light grey regions) in a setting with $k$ measures $\mu_{1},\ldots,\mu_{k}$ satisfying \ref{hyp:H1} and \ref{hyp:H2}.} \label{figure:singularrange}
\end{figure}

A simple computation shows that, since $\beta_{m}\leq 2\alpha_{m}$ must hold (see Section 3 of \cite{Mit02}), the necessary condition
\begin{equation*}
    q \geq \frac{2p(1-\sum_{m=1}^k\alpha_m) + p\sum_{m=1}^k\beta_m}{(p-1)\sum_{m=1}^{k}\beta_m}
\end{equation*}
covers more pairs $(p,q)$ than those following from the linear theory. To illustrate this, let $k=2$, $p=2$ and $0<\beta_{2}=\alpha_{2}\leq\alpha_{1}=\beta_{1}\leq 1$. Theorem \ref{nec-sing} gives
    \begin{equation*}
        q\geq \frac{4}{\alpha_{1}+\alpha_{2}}-2.
    \end{equation*}
On the other hand, Theorem \ref{thm:ST} implies
\begin{equation*}
    \left\|\widehat{f_{1}\mathrm{d}\mu_{1}}\widehat{f_{2}\mathrm{d}\mu_{2}}\right\|_{L^q(\mathbb{R})}\leq \|\widehat{f_{1}\mathrm{d}\mu_{1}}\|_{L^{2q}(\mathbb{R})}\|\widehat{f_{2}\mathrm{d}\mu_{2}}\|_{L^{2q}(\mathbb{R})} \lesssim\|f_{1}\|_{L^2(\d\mu_1)}\|f_{2}\|_{L^2(\d\mu_2)},
\end{equation*}
for $q\geq\frac{1}{2}\max_{i} \left(\frac{4}{\alpha_{i}}-2\right)=\frac{2}{\alpha_{2}}-1$, hence the range
\begin{equation*}
   \frac{2}{\alpha_{2}}-1>q\geq\frac{4}{\alpha_{1}+\alpha_{2}}-2
\end{equation*}
is a genuinely `multilinear gap' that is not accessible through separate linear estimates. This suggests that there are manifestations of transversality beyond the absolute continuity of the convolution of the underlying measures (see subsection~\ref{sec:LI}).

\noindent\textbf{A sufficient condition.} The following proposition gives a condition on the convolution $\mu_{1}\ast\cdots\ast\mu_{k}$ of $k$ measures so that $\R^*_{\mu_1,\ldots,\mu_k}(p\times\cdots\times p\to q)$ holds. The proof of this particular result follows the lines of \cite{Chen,Trainor}, recovers it when setting $p = q'$, and does not involve a substantially new idea. We believe, however, that its applicability makes it valuable; as we will see, it is a great source of examples.
\begin{proposition}\label{thm:mainthm}
    Let $\mu_1,\ldots,\mu_k$ be finite, compactly supported, Borel measures on $\rd$. If for some $p>1$ and $q\geq\max(2,p')$, $\mu_1*\cdots*\mu_k\in L^{\frac{q(p-1)}{q(p-1)-p}}(\rd)$, then
    \begin{equation}\label{ineq1-230726}
        \Big\| \widehat{f_1\d\mu_1}\cdots\widehat{f_k\d\mu_k} \Big\|_{L^q(\rd)} \lesssim \prod_{i=1}^k \|f_i\|_{L^p(\d\mu_i)}
    \end{equation}
    holds for all $f_i\in L^p(\d\mu_i)$. If $p=1$, then \eqref{ineq1-230726} holds trivially with $q=\infty$ without any assumption on $\mu_1*\cdots*\mu_k$.
\end{proposition}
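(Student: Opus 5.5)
Set $F=\widehat{f_1\d\mu_1}\cdots\widehat{f_k\d\mu_k}$ and $\nu=f_1\d\mu_1*\cdots*f_k\d\mu_k$, so that $F=\widehat{\nu}$. The plan is to interpolate between two endpoint estimates for $\nu$, in the spirit of the arguments of \cite{Chen,Trainor}, and then apply Hausdorff--Young. The case $p=1$ is immediate: $\|\widehat{f_m\d\mu_m}\|_\infty\le\|f_m\|_{L^1(\d\mu_m)}$, and we multiply these bounds.

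For $p>1$, write $\rho=\mu_1*\cdots*\mu_k$ and $s=\frac{q(p-1)}{q(p-1)-p}$. Since $q\ge p'$, we have $q(p-1)\ge p$, so $s\in(1,\infty]$ (with $s=\infty$ when $q=p'$). The first step is a pointwise bound on $\nu$ via Hölder. Testing $\nu$ against $\phi\ge0$, using $|\prod_m f_m(x_m)|\le \prod_m|f_m(x_m)|$ and Hölder in the product measure $\mu_1\otimes\cdots\otimes\mu_k$ with exponents $p,p'$, gives
\[
\Big|\int\phi\,\d\nu\Big|\le \Big(\int \phi(x_1+\cdots+x_k)\prod_m|f_m(x_m)|^p\,\d\mu_1\cdots\d\mu_k\Big)^{1/p}\Big(\int\phi\,\d\rho\Big)^{1/p'}.
\]
Hence $|\nu|\le (|f_1|^p\d\mu_1*\cdots*|f_k|^p\d\mu_k)^{1/p}\,\rho^{1/p'}$, understood as a domination of measures. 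Since $\rho$ is absolutely continuous, $\nu\ll\mathcal L^d$, and a Radon--Nikodym/density argument yields the bound on densities
\[
|\nu(x)|\le g(x)^{1/p}\rho(x)^{1/p'},\qquad g=|f_1|^p\d\mu_1*\cdots*|f_k|^p\d\mu_k .
\]
Here $g$ is itself absolutely continuous because $g\le \|f\|^p_\infty$-type truncations can be dominated by $\rho$ after a monotone approximation. I expect this step to be the main technical point: making the pointwise (density) inequality rigorous for measures rather than functions. I would handle it by first proving it for bounded $f_m$, where $g\lesssim\rho$ so everything has an $L^1$ density, and then passing to the limit by monotone convergence.

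Next, since $q\ge2$, Hausdorff--Young gives $\|F\|_{L^q}\le\|\nu\|_{L^{q'}}$. By Hölder with $\frac1{q'}=\frac{1}{pr_1}+\frac{1}{p' r_2}$,
\[
\|\nu\|_{L^{q'}}\le \|g\|_{L^{r_1}}^{1/p}\,\|\rho\|_{L^{r_2}}^{1/p'}.
\]
The goal is to choose $r_1=1$, since $\|g\|_{L^1}=\prod_m\|f_m\|^p_{L^p(\d\mu_m)}$. Then $\frac{1}{p'r_2}=\frac1{q'}-\frac1p=1-\frac1q-\frac1p$, so
\[
r_2=\frac{p-1}{p}\cdot\frac{pq}{pq-q-p}=\frac{q(p-1)}{q(p-1)-p}=s.
\]
This is exactly the hypothesis, and the condition $q\ge p'$ guarantees that the exponents are admissible. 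Combining the displays gives $\|F\|_{L^q}\le\|\rho\|_{L^s}^{1/p'}\prod_m\|f_m\|_{L^p(\d\mu_m)}$, which is the required estimate.
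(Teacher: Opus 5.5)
Your proposal is correct and follows the same argument as the paper. The paper applies H\"older with exponents $p,p'$ inside the convolution integral to get $|\nu|\le g^{1/p}\rho^{1/p'}$, then uses Hausdorff--Young and H\"older in the outer variable with $\frac{1}{q'}=\frac1p+\frac1r$, which produces exactly the exponent $\frac{q(p-1)}{q(p-1)-p}$ on $\rho$. The only difference is that you justify the density inequality directly, while the paper mollifies each $\mu_m$ and lets $\varepsilon\to0$. For that step, your truncation argument is unnecessary: $g,\nu\ll\rho\ll\mathcal{L}^d$ holds simply because $g$ and $\nu$ are push-forwards of measures absolutely continuous with respect to $\mu_1\otimes\cdots\otimes\mu_k$, and Lebesgue differentiation then gives the pointwise bound.
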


\begin{remark} Alternatively, one can rewrite the previous proposition as follows. Let $\mu_1,\ldots,\mu_k$ be finite, compactly supported, Borel measures on $\mathbb{R}^d$. If for some $p>1$, $1<q\leq\infty$ and $p'q'\geq 2$ it holds that $\mu_1*\cdots*\mu_k\in L^q(\mathbb{R}^d)$, then
    \begin{equation*}
        \Big\| \widehat{f_1\d\mu_1}\cdots\widehat{f_k\d\mu_k} \Big\|_{L^{p'q'}(\rd)} \lesssim \prod_{i=1}^k \|f_i\|_{L^p(\d\mu_i)}
    \end{equation*}
    holds for all $f_i\in L^p(\d\mu_i)$. 
    
\end{remark}

Determining the conditions that guarantee that the convolution of measures will be absolutely continuous is a fundamental question in fractal geometry, and the different notions of dimensions defined in Section~\ref{sec:prelim} lie at its core. In \cite{SS} the authors gave an answer to this question using the $L^q$ and Fourier dimensions of the measures involved. The following two corollaries follow from applying \cite[Theorem~4.4]{SS} and \cite[Theorem~D]{SS} respectively, to Proposition~\ref{thm:mainthm}.
\begin{corollary}\label{cor1} Let $\mu,\nu$ be two finite, compactly supported, Borel measures on $\rd$. Let $p\geq1$, $q\geq 2p'$, and $p_0 := \frac{q(p-1)}{q(p-1)-p}$. If
    \begin{equation*}
            d-D_{p_0}(\mu) < \fd\nu.
    \end{equation*}
    Then
    \begin{equation*}
        \big\| \widehat{f\d\mu}\,\widehat{g\d\nu} \big\|_{L^q(\rd)} \lesssim \|f\|_{L^p(\d\mu)} \|g\|_{L^p(\d\nu)}
    \end{equation*}
    holds for all $f\in L^{p}(\d\mu)$, $g\in L^{p}(\d\nu)$.
\end{corollary}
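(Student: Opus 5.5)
The plan is to reduce Corollary~\ref{cor1} to Proposition~\ref{thm:mainthm} with $k=2$, $\mu_1=\mu$, $\mu_2=\nu$. The only nontrivial input is the absolute continuity of $\mu*\nu$ with a density in $L^{p_0}(\rd)$, which I would take from \cite[Theorem~4.4]{SS}.

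\textbf{Step 1: the degenerate case $p=1$.} Here $p'=\infty$, so the hypothesis $q\geq 2p'$ forces $q=\infty$. The estimate then holds trivially by the last sentence of Proposition~\ref{thm:mainthm}, since $|\widehat{f\d\mu}|\leq\|f\|_{L^1(\d\mu)}$, and likewise for $\nu$. From now on I assume $p>1$.

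\textbf{Step 2: checking the exponents.} Since $p'\geq 1$, the hypothesis $q\geq 2p'$ gives both $q\geq 2$ and $q\geq p'$, so $q\geq\max(2,p')$ as Proposition~\ref{thm:mainthm} requires. Moreover $q>p'$ means $q(p-1)>p$, so the exponent
\begin{equation*}
p_0=\frac{q(p-1)}{q(p-1)-p}
\end{equation*}
is finite and larger than $1$. Finally, $q\geq 2p'$ means $q(p-1)\geq 2p$. Since $t\mapsto t/(t-p)$ is decreasing for $t>p$, this yields
\begin{equation*}
p_0\leq\frac{2p}{2p-p}=2.
\end{equation*}
Hence $p_0\in(1,2]$. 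I expect this to be exactly the range of $L^{p_0}$-exponents in which \cite[Theorem~4.4]{SS} is stated, and this is presumably why the corollary assumes $q\geq 2p'$ rather than merely $q\geq\max(2,p')$.

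\textbf{Step 3: applying \cite{SS} and concluding.} \cite[Theorem~4.4]{SS} asserts, roughly, that if $D_{p_0}(\mu)+\fd\nu>d$ for some $p_0\in(1,2]$, then $\mu*\nu\ll\mathcal{L}^d$ with density in $L^{p_0}(\rd)$. Our hypothesis $d-D_{p_0}(\mu)<\fd\nu$ is precisely this condition. Feeding $\mu*\nu\in L^{p_0}$ into Proposition~\ref{thm:mainthm} gives the bilinear estimate for all $f\in L^p(\d\mu)$ and $g\in L^p(\d\nu)$.

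\textbf{Main obstacle.} The main point to verify is that the hypotheses of \cite[Theorem~4.4]{SS} match ours exactly: the admissible range of the $L^q$-dimension index, strict versus non-strict inequality, and whether it assumes finite compactly supported measures. If that theorem were stated only in the $L^2$ case or with different normalisations of the Fourier dimension, I would adapt it by monotonicity. Since $q\mapsto D_q(\mu)$ is non-increasing, one can pass between exponents, and working with any exponent $s<\fd\nu$ close to $\fd\nu$ suffices because the inequality is strict.
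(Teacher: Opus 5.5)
Your proposal is correct and follows the paper's argument: apply \cite[Theorem~4.4]{SS} to get $\mu*\nu\in L^{p_0}(\rd)$, where the hypothesis $q\geq 2p'$ is exactly what guarantees $p_0\leq 2$, and then invoke Proposition~\ref{thm:mainthm} with $k=2$ (the case $p=1$ being trivial). One caveat: your fallback would not work as stated, since $D_2(\mu)\leq D_{p_0}(\mu)$ for $p_0\leq 2$, so monotonicity cannot turn the hypothesis $D_{p_0}(\mu)+\fd\nu>d$ into the stronger $L^2$ hypothesis. This is moot, however, because the cited theorem already covers exponents in $(1,2]$.
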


\begin{corollary}\label{cor2} For $j=1,2$, let $\boldsymbol{\varphi_j} = (\lambda_j x + a_{ji})_{i=1}^{m_j}$ be an IFS satisfying the strong separation condition, with $\frac{\log|\lambda_2|}{\log|\lambda_1|}\notin\mathbb{Q}$ and $\frac{\log m_1}{|\log\lambda_1|} + \frac{\log m_2}{|\log\lambda_2|} >1$. Let $\mu_j$ be a self-similar measure on $\boldsymbol{\varphi_j}$. For $p>1$, $q\geq 2p'$, let 
\[
    p_0=\frac{q(p-1)}{q(p-1)-p},
\]
which satisfies $p_{0}\leq 2$ by hypothesis. If $D_{p_0}(\mu_1) + D_{p_0}(\mu_2)>1$, then there exists a set $E\subseteq\mathbb{R}$ of Hausdorff dimension zero such that for all $u\in\mathbb{R}\backslash E$, the estimate
    \begin{equation}\label{eq:cor2}
        \|\widehat{f\d\mu_1}\widehat{g\d T_u\mu_2}\|_{L^q(\mathbb{R})} \lesssim \|f\|_{L^p(\d\mu_1)}\|g\|_{L^{p}(\d T_u\mu_2)}
    \end{equation}
    holds for all $f\in L^{p}(\d\mu_1)$, $g\in L^{p}(\d T_u\mu_2)$, where $T_u(x) = ux$.
\end{corollary}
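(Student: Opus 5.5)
The plan is to deduce Corollary~\ref{cor2} from Proposition~\ref{thm:mainthm} with $k=2$, $d=1$, $\mu_1$ and $T_u\mu_2$, using \cite[Theorem~D]{SS} to get the integrability of $\mu_1*T_u\mu_2$.

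\textbf{Exponent bookkeeping.} Proposition~\ref{thm:mainthm} asks that $q\geq\max(2,p')$ and that
\[
\mu_1*T_u\mu_2\in L^{p_0}(\mathbb{R}),\qquad p_0=\frac{q(p-1)}{q(p-1)-p}.
\]
Since $p>1$, the hypothesis $q\geq 2p'$ gives both $q\geq 2$ and $q\geq p'$. Next write $p_0=\bigl(1-\tfrac{p'}{q}\bigr)^{-1}$, which is $p_0=(1-1/p')^{-1}$ with $p'$ replaced by $q/p'$; that is, $p_0$ is the H\"older conjugate of $q/p'$. Then $q/p'\geq 2$ yields $1<p_0\leq 2$, which is the claim $p_0\leq 2$ in the statement.

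\textbf{Absolute continuity via \cite{SS}.} The key input is \cite[Theorem~D]{SS}, in the form: for self-similar measures on IFSs with the strong separation condition, $\log|\lambda_2|/\log|\lambda_1|\notin\mathbb{Q}$, $1<p_0\leq2$ and $D_{p_0}(\mu_1)+D_{p_0}(\mu_2)>1$, there is an exceptional set $E$ with $\hd E=0$ such that
\[
\mu_1*T_u\mu_2\in L^{p_0}(\mathbb{R})\quad\text{for all } u\notin E.
\]
The step I expect to need most care is matching our hypotheses to the precise formulation of that theorem. I must check two things. First, that the restriction $p_0\in(1,2]$ there is exactly why $p_0\leq2$ is noted. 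Second, that the dimension condition $\frac{\log m_1}{|\log\lambda_1|}+\frac{\log m_2}{|\log\lambda_2|}>1$ (the sum of the similarity dimensions, equal to the Hausdorff dimensions under SSC) is among its hypotheses or is implied by $D_{p_0}(\mu_1)+D_{p_0}(\mu_2)>1$. It is implied, since $D_{p_0}(\mu_j)\leq D_0(\mu_j)=\dim\supp\mu_j$. If \cite{SS} phrases the conclusion via $D_{p_0}$ of the convolution rather than membership in $L^{p_0}$, I will use their stated $L^{p}$ conclusion directly, as the paper indicates.

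\textbf{Conclusion.} With $\mu_1*T_u\mu_2\in L^{p_0}$ and the exponent conditions verified, Proposition~\ref{thm:mainthm} yields \eqref{eq:cor2} for every $u\in\mathbb{R}\setminus E$. The measures involved are finite, compactly supported and Borel, since $T_u$ is linear and the attractors are compact.
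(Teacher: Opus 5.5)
Your proposal is correct and takes the paper's route: the paper obtains Corollary~\ref{cor2} by feeding the $L^{p_0}$-density conclusion of \cite[Theorem~D]{SS} for $\mu_1*T_u\mu_2$ (valid for $u$ outside a set of zero Hausdorff dimension) into Proposition~\ref{thm:mainthm} with $k=2$, $d=1$. Your exponent bookkeeping ($q\geq 2p'$ gives $q\geq\max(2,p')$, and $p_0=(q/p')'\leq 2$) and your remark that the similarity-dimension hypothesis already follows from $D_{p_0}(\mu_1)+D_{p_0}(\mu_2)>1$ fill in details the paper leaves implicit.
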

Using the fact that $\fd (\mu_1*\mu_2) \geq \min\{\fd\mu_1 + \fd\mu_2, d\}$ one can trivially generalise Corollary~\ref{cor1} to obtain $k$-linear estimates whenever $d-D_{p_0}(\mu_1) < \sum_{m=2}^k \fd\mu_m$, but we do not expect this to yield the best possible bounds; to illustrate this, suppose we are interested in proving trilinear Fourier extension estimates. Write
\begin{equation*}
    |\widehat{f_{1}\mathrm{d}\mu_{1}} \widehat{f_{2}\mathrm{d}\mu_{2}}\widehat{f_{3}\mathrm{d}\mu_{3}}|=|\widehat{f_{1}\mathrm{d}\mu_{1}}\widehat{f_{2}\mathrm{d}\mu_{2}}|^{\frac{1}{2}}|\widehat{f_{2}\mathrm{d}\mu_{2}}\widehat{f_{3}\mathrm{d}\mu_{3}}|^{\frac{1}{2}}|\widehat{f_{3}\mathrm{d}\mu_{3}}\widehat{f_{1}\mathrm{d}\mu_{1}}|^{\frac{1}{2}}
\end{equation*}
and apply H\"older's inequality to bound the right-hand side by norms of bilinear operators for which either Corollary \ref{cor1} or \ref{cor2} can be applied. Even though we would be getting non-trivial trilinear estimates, applying H\"older destroys the simultaneous interaction between the three measures. In the smooth setting, for instance, the trilinear estimates that follow from the argument above are worse than those obtained in \cite{BCT}. It would be interesting to obtain non-trivial extensions of the work in \cite{SS} to multiple convolutions, not only to provide more examples for Proposition~\ref{thm:mainthm}, but also to shed light on new manifestations of `fractal transversality' regarding multiple measures simultaneously.

The following shows that Proposition~\ref{thm:mainthm} gives multilinear estimates for measures for which \cite[Theorem~5.1]{Trainor} did not apply.
\begin{example}\label{ex:LpLinfty}
    Let $0<\alpha,\beta<1$, define the measures $\d\mu= x^{-\alpha}1_{[0,1]} \d x$, $\d\nu= x^{-\beta}1_{[0,1]} \d x$. Their convolution is
    \begin{equation*}
        \mu\ast\nu (x) = \int_{\max\{0,x-1\}}^{\min\{1,x\}} (x-y)^{-\alpha} y^{-\beta} \,\d y= \underbrace{\mathbbm{1}_{(0,1)}(x)\int_{0}^{x} (x-y)^{-\alpha} y^{-\beta} \,\d y}_{M_{1}(x)} + \underbrace{\mathbbm{1}_{[1,2)}(x)\int_{x-1}^{1} (x-y)^{-\alpha} y^{-\beta} \,\d y}_{M_{2}(x)}.
    \end{equation*}
    The change of variables $y=tx$, $0\leq t\leq 1$, gives $M_{1}(x)=x^{1-\alpha-\beta}B(1-\alpha,1-\beta)$, where $B(z_1,z_2)$ is the Beta function, which converges for $z_1,z_2 > 0$. The same change of variables turns $M_{2}(x)$ into a term uniformly bounded in $x$, hence only $M_{1}(x)$ determines $L^{p}$ integrability of $\mu\ast\nu$. Observe that $\mu\ast\nu\in L^{p}$ for every $1\leq p< 2$ if $\alpha=\beta=\frac{3}{4}$, but clearly $\mu*\nu\notin L^\infty(\mathbb{R})$.
    \end{example}

Corollary \ref{cor2} is also a good source of examples, although less explicit than those following from Corollary \ref{cor1}.

\begin{example}\label{example-010226}
    Consider the two IFSs
    \begin{align*}
        \boldsymbol{\varphi_1} &= \big\{ \tfrac{1}{4} x + a_1, \tfrac{1}{4}x + a_2, \tfrac{1}{4}x + a_3 \big\};\\
        \boldsymbol{\varphi_2} &= \big\{ \tfrac{1}{3} x + b_1, \tfrac{1}{3}x + b_2\big\},
    \end{align*}
    where $a_i,b_i\in[0,1]$ can be chosen arbitrarily so that both IFSs satisfy the strong separation condition. Let $\mu_1,\mu_2$ be the self-similar measures on $\boldsymbol{\varphi_1},\boldsymbol{\varphi_2}$ with probabilities $(\rho_1,\rho_2,\rho_3)$ and $(\gamma,1-\gamma)$ respectively. Note that $\frac{\log 3}{\log 4} + \frac{\log 2}{\log 3} \approx 1.42\ldots >1$.

    Let $p_0$ be such that
    \begin{equation}\label{eq:conditionP0}
        \frac{\log (\rho_1^{p_0} + \rho_2^{p_0} + \rho_3^{p_0})}{(1-p_0)\log4} + \frac{\log (\gamma^{p_0} + (1-\gamma)^{p_0})}{(1-p_0)\log3} >1.
    \end{equation}
    Since the $L^q$ dimensions are non-increasing in $q$, there will be a unique value $\widetilde{p_0}=\widetilde{p_0}(\rho_{1},\rho_{2},\rho_{3},\gamma)$ such that \eqref{eq:conditionP0} holds for all $p_0\in[1,\widetilde{p_0})$. This will imply that \eqref{eq:cor2} holds for any $p\geq1$ and $q\geq 2p'$ satisfying $1\leq\frac{q(p-1)}{q(p-1)-p}\leq \widetilde{p_0}$.
    
    As an example, let $\rho_1 = 0.1$, $\rho_2 = 0.65$. For $p_0=2$ and any $\gamma\in[0.3,0.7]$, there exists $E\subseteq\mathbb{R}$ with $\hd E = 0$ such that for any $u\in \mathbb{R}\backslash E$,
    \begin{equation*}
        \|\widehat{f\d\mu_1}\widehat{g\d \nu}\|_{L^q(\mathbb{R})} \lesssim \|f\|_{L^p(\d\mu_1)}\|g\|_{L^{p}(\d \nu)}
    \end{equation*}
    holds for any $q\geq 2p'$, $p\geq1$, where $\nu$ is the self-similar measure $T_u\mu_{2}$ on the IFS $\big\{ \tfrac{x}{3} + ub_1, \tfrac{x}{3} + ub_2\big\}$ with probabilities $(\gamma,1-\gamma)$.
\end{example}

The following examples use linear theory to obtain bilinear estimates for measures with singular convolutions for which we cannot apply the upcoming Proposition~\ref{thm:mainthm}. The first one is genuinely fractal, whereas the second uses classical Fourier restriction theory for curves and surfaces in $\mathbb{R}^{3}$. The latter shows that Proposition~\ref{thm:mainthm} is still far from providing a satisfactory answer to the main question raised in the introduction.
\begin{example}\label{ex:estimateSingular} Let $0<\beta_1\leq\beta_2<d$ such that $\beta_1 + \beta_2 <d$. Let $E_{1}$ be a Salem set of dimension $\beta_1$, i.e. $\fd E_1 = \hd E_1 = \beta_1$, and $E_2$ be such that $\hd E_2= \pd E_2 = \beta_2/2$. Define $F_{2}$ as the image of $E_{2}$ under Brownian motion. The set $F_{2}$ satisfies $\fd F_{2} = \hd F_{2} = \pd F_{2} = \beta_2$. This is to guarantee that
\begin{equation*}
    \hd(E_{1} + F_{2}) \leq \hd(E_{1} \times F_{2}) \leq \hd E_{1} + \pd F_{2} = \beta_1 + \beta_2 <d.
\end{equation*}
This way, any measure supported on $E_{1} + F_{2}$ is singular. Let $\mu_{1},\mu_{2}$ be measures on $E_{1}$ and $F_{2}$ respectively such that for some $\varepsilon>0$,
\begin{equation*}
    \big| \widehat{\d\mu_{1}}(\xi) \big|^{2} \lesssim |\xi|^{-(\beta_1- \varepsilon)};\qquad  \big| \widehat{\d\mu_{2}}(\xi) \big|^{2} \lesssim |\xi|^{-(\beta_2- \varepsilon)}.
\end{equation*}
Since $\mu_1*\mu_{2}$ is a measure supported on $E_{1} + F_{2}$, then $\mu_1 *\mu_2$ is singular. Also, recall that the $L^\infty$-dimension is bounded below by half the Fourier dimension. Therefore, by Cauchy--Schwarz and the Stein--Tomas estimate, for any $q>\frac{2d}{\beta_1-\varepsilon}$,
\begin{equation*}
    \|\widehat{f\d\mu_1} \widehat{g\d\mu_2}\|_{L^q(\rd)} \leq \|\widehat{f\d\mu_1}\|_{L^{2q}(\rd)} \|\widehat{g\d\mu_2}\|_{L^{2q}(\rd)} \lesssim \|f\|_{L^2(\d\mu_1)} \|g\|_{L^2(\d\mu_2)}.
\end{equation*}

The example above shows that there are nontrivial multilinear Fourier extension estimates even if the convolution of the underlying measures is singular. For the interested reader, this naturally raises the question of whether there is a suitable notion of `transversality' between these measures whose convolution is singular (i.e. a way of exploiting their interaction) that allows one to extend the range implied by the linear theory. The next example achieves that indirectly by relying on higher-dimensional phenomena as a substitute for transversality.
\end{example}

\begin{example}\label{example1-22072026} For $\varepsilon>0$, let $\gamma_{j}:I_{\varepsilon}=(-\varepsilon,\varepsilon)\rightarrow\mathbb{R}^{3}$, $1\leq j\leq 2$, be curve parametrisations given by 
\begin{equation*}
    \eqalign{
    \gamma_{1}(s)&\displaystyle=(s,s^{2},s^{3}), \cr
    \gamma_{2}(t)&\displaystyle=(t^{3},t^{2},t).
    }
\end{equation*}
Let $\chi:(-\varepsilon,\varepsilon)\rightarrow\mathbb{R}$ be a smooth bump and define $\mu_{j}:=(\gamma_{j})_{\ast}(\chi(x)\mathrm{d}x)$, $1\leq j\leq 2$. By Drury's restriction theorem (see \cite{Drury}) for the moment curve in $\mathbb{R}^{3}$ (applied in its dual formulation) and Cauchy--Schwarz,
\begin{equation}\label{eq1-sing-example-220726}
    \|\widehat{f_{1}\mathrm{d}\mu_{1}}\widehat{f_{2}\mathrm{d}\mu_{2}}\|_{L^{6}(\mathbb{R}^{3})}\leq \|\widehat{f_{1}\mathrm{d}\mu_{1}}\|_{L^{12}(\mathbb{R}^{3})}\|\widehat{f_{2}\mathrm{d}\mu_{2}}\|_{L^{12}(\mathbb{R}^{3})}\lesssim \|f_{1}\|_{L^{2}(\mathrm{d}\mu_{1})}\|f_{2}\|_{L^{2}(\mathrm{d}\mu_{2})}
\end{equation}
On the other hand, the convolution $\mu_{1}\ast\mu_{2}$ is singular because it is supported in the smooth manifold $\mathcal{M}$ parametrised by

\begin{equation*}
    \Phi(s,t)=(s+t^{3},s^{2}+t^{2},s^{3}+t),\quad -\varepsilon\leq s,t\leq \varepsilon.
\end{equation*}
One can check that the Gaussian curvature of $\mathcal{M}$ at $\Phi(0,0)$ is strictly positive, hence by choosing $\varepsilon>0$ small enough we can guarantee that $\Phi(I_{\varepsilon}\times I_{\varepsilon})\subset\mathbb{R}^{3}$ is a two-dimensional surface with non-vanishing Gaussian curvature. For $H(\Phi(s,t))=f_{1}(\gamma_{1}(s))f_{2}(\gamma_{2}(t))$, the Stein--Tomas theorem applied to the measure $\mu=\mu_{1}\ast\mu_{2}=\Phi_{\ast}(\chi(s)\chi(t)\mathrm{d}s\,\mathrm{d}t)$ gives

\begin{equation}\label{eq2-sing-example-220726}
 \eqalign{
 \displaystyle\|\widehat{f_{1}\mathrm{d}\mu_{1}}\widehat{f_{2}\mathrm{d}\mu_{2}}\|_{L^{4}(\mathbb{R}^{3})} &\displaystyle=\|\widehat{H\mathrm{d}\mu}\|_{L^{4}(\mathbb{R}^{3})} \cr
 &\displaystyle\lesssim \|H\|_{L^{2}(\mathrm{d}\mu)} \cr
 &\displaystyle=\left(\int_{I_{\varepsilon}}\int_{I_{\varepsilon}}f_{1}(s)f_{2}(t)\chi(s)\chi(t)\mathrm{d}s\mathrm{d}t\right)^{\frac{1}{2}} \cr
 &\displaystyle=\|f_{1}\|_{L^{2}(\mathrm{d}\mu_{1})}\|f_{2}\|_{L^{2}(\mathrm{d}\mu_{2})}.
 }
\end{equation}

This example shows that, even though $\mu_{1}\ast\mu_{2}$ is singular, there are bilinear restriction estimates for the operator $(f_{1},f_{2})\mapsto \widehat{f_{1}\mathrm{d}\mu_{1}}\widehat{f_{2}\mathrm{d}\mu_{2}}$ that are not obtainable by separating the oscillatory factors $\widehat{f_{1}\mathrm{d}\mu_{1}}$ and $\widehat{f_{2}\mathrm{d}\mu_{2}}$ with Cauchy-Schwarz.
\end{example}

\section{Proof of Proposition~\ref{thm:mainthm}}\label{proof-main-abs-310126}

Let $\varphi\in\mathcal{S}_+(\rd)$ be a function supported in $B(0,1)$ with $\|\varphi\|_{L^1(\rd)}=1$, and define an approximate identity $(\varphi_\varepsilon)_{\varepsilon>0}$ by $\varphi_\varepsilon(x) = \varepsilon^{-d}\varphi(x/\varepsilon)$. For each $m=1,\ldots,k$, let $\mu_{m,\varepsilon} = \varphi_\varepsilon * \mu_m$, which converges weakly to $\mu_{m}$ as $\varepsilon\to0$.   
    
    By Hausdorff--Young's inequality, since $q\geq2$
    \begin{equation*}
         \Big\| \widehat{f_1\mu_{1,\varepsilon}}\cdots\widehat{f_k\mu_{k,\varepsilon}} \Big\|_{L^q(\rd)} \leq  \Big\| f_1\mu_{1,\varepsilon}*\cdots*f_k\mu_{k,\varepsilon} \Big\|_{L ^{q'}(\rd)}.
    \end{equation*}
    Write
    \begin{equation*}
        F(\xi,\eta) = f_1(\eta_1)\prod_{j=2}^k f_j(\eta_j - \eta_{j-1});\qquad M_\varepsilon= \mu_{1,\varepsilon}(\eta_1)\prod_{j=2}^k \mu_{j,\varepsilon}(\eta_j-\eta_{j-1}),
    \end{equation*}
    where $\eta=(\eta_{1},\ldots,\eta_{k-1})\in\mathbb{R}^{k-1}$ and $\xi=\eta_{k}$. Then by the definition of convolution and H\"older's inequality for some $p\geq1$,
    \begin{align*}
        \Big\| f_1\mu_{1,\varepsilon}*\cdots*f_k\mu_{k,\varepsilon} \Big\|_{L ^{q'}(\rd)} &= \left( \int_{\rd} \left| \int_{\mathbb{R}^{(k-1)d}} M_\varepsilon(\xi,\eta)^{\frac{1}{p}}F(\xi,\eta) M_\varepsilon(\xi,\eta)^{\frac{1}{p'}} \,\d\eta \right|^{q'} \,\d\xi \right)^{\frac{1}{q'}}\\
        &\leq \left(\int_{\mathbb{R}^{d}}\left|\int_{\mathbb{R}^{(k-1)d}}M_{\varepsilon}(\xi,\eta) \,\mathrm{d}\eta\right|^{\frac{q'}{p'}}\cdot\left|\int_{\mathbb{R}^{(k-1)d}}|F(\xi,\eta)|^{p}M_{\varepsilon}(\xi,\eta) \,\d\eta\right|^{\frac{q'}{p}}\mathrm{d}\xi\right)^{\frac{1}{q'}} \\
        &=\Bigg\| \bigg( \int_{\mathbb{R}^{(k-1)d}} M_\varepsilon(\cdot,\eta) \,\d\eta \bigg)^{\frac{1}{p'}}  \bigg( \int_{\mathbb{R}^{(k-1)d}} |F(\cdot,\eta)|^p M_\varepsilon(\cdot,\eta) \,\d\eta \bigg)^{\frac{1}{p}} \Bigg\|_{L^{q'}(\rd)}.
    \end{align*}
    By H\"older's inequality, with $r = \frac{pq}{q(p-1)-p}$, $\frac{1}{q'} = \frac{1}{p} + \frac{1}{r}$
    \begin{align*}
        \Bigg\| \bigg( \int_{\mathbb{R}^{(k-1)d}} &M_\varepsilon(\cdot,\eta) \,\d\eta \bigg)^{\frac{1}{p'}}  \bigg( \int_{\mathbb{R}^{(k-1)d}} |F(\cdot,\eta)|^p M_\varepsilon(\cdot,\eta) \,\d\eta \bigg)^{\frac{1}{p}} \Bigg\|_{L^{q'}(\rd)} \\
        &\leq \Bigg\| \bigg( \int_{\mathbb{R}^{(k-1)d}} M_\varepsilon(\cdot,\eta) \,\d\eta \bigg)^{\frac{1}{p'}} \Bigg\|_{L^r(\rd)} \Bigg\|  \bigg( \int_{\mathbb{R}^{(k-1)d}} |F(\cdot,\eta)|^p M_\varepsilon(\cdot,\eta) \,\d\eta \bigg)^{\frac{1}{p}}  \Big\|_{L^p(\rd)}
    \end{align*}
    For the first term note that
    \begin{align*}
        \Bigg\| \bigg( \int_{\mathbb{R}^{(k-1)d}} M_\varepsilon(\cdot,\eta) \,\d\eta \bigg)^{\frac{1}{p'}} \Bigg\|_{L^r(\rd)}  &= \Bigg\|  \int_{\mathbb{R}^{(k-1)d}} M_\varepsilon(\cdot,\eta) \,\d\eta  \Bigg\|_{L^{\frac{r}{p'}}(\rd)}^{\frac{1}{p'}} \\
        &=  \|  \mu_{1,\varepsilon}*\cdots*\mu_{k,\varepsilon} \|_{L^{\frac{r}{p'}}(\rd)}^{\frac{1}{p'}} \\
        &= \|  \mu_{1,\varepsilon}*\cdots*\mu_{k,\varepsilon} \|_{L^{\frac{q(p-1)}{q(p-1)-p}}(\rd)}^{\frac{1}{p'}}.
    \end{align*}
    Finally, the second term is
    \begin{equation*}
        \Bigg\|  \bigg( \int_{\mathbb{R}^{(k-1)d}} |F(\cdot,\eta)|^p M_\varepsilon(\cdot,\eta) \,\d\eta \bigg)^{\frac{1}{p}}  \Big\|_{L^p(\rd)}= \prod_{i=1}^k \|f_i\|_{L^p(\mu_{i,\varepsilon})},
    \end{equation*}
    and letting $\varepsilon\to0$ yields the desired result.

\section{Proofs of the main results}\label{sec:MultiKnappV3}

Recall that the Knapp example for a smooth manifold consists of finding a function that gives a necessary condition for the linear estimate \eqref{rest1} by capturing flatness. The appropriate choice of such function is the characteristic function of a small cap. For simplicity, consider the sphere $(\mathbb{S}^{d-1},\d\sigma_{d-1})$ in $\mathbb{R}^{d}$, let $C_{\delta}$ be a $\delta^{\frac{1}{2}}\times\ldots\times\delta^{\frac{1}{2}}\times\delta$ cap of it centred at the north pole $e_{d}=(0,\ldots,0,1)$ and $g:=\chi_{C_{\delta}}$. One has
\begin{equation*}
    \frac{\|\widehat{g\d\sigma_{d-1}}\|_{L^q(\rd)}}{\|g\|_{L^2(\d\sigma_{d-1})}}\gtrsim\frac{\delta^{\frac{d-1}{2}-\frac{d+1}{2q}}}{\delta^{\frac{d-1}{4}}},
\end{equation*}
and letting $\delta\rightarrow 0$ implies $q\geq \frac{2(d+1)}{d-1}$ as a necessary condition for an $L^{2}$-based spherical estimate. In \cite{HL} and \cite{Che16}, the authors constructed Knapp-type example which proved the sharpness of the Stein--Tomas Theorem~\ref{thm:ST}.  The idea behind \cite{HL,Che16} is to randomise the construction of a Cantor-type set such that its natural measure has Fourier decay and satisfies a ball condition. However, to understand the `flatness' of the set, the construction simultaneously embeds in it an arithmetically structured Cantor set, small enough so that it does not affect these conditions. The functions that give the desired necessary condition are characteristic functions on neighbourhoods of this arithmetically structured set.

The constraints on $p$ and $q$ in the statement of the multilinear Conjecture \ref{generalklinearapr422} are more delicate: in \cite{TVV1}, Tao, Vargas and Vega identified two necessary conditions coming from considering \textit{squashed} and \textit{stretched} caps. These conditions predicted that one could prove multilinear estimates beyond the range of linear ones, which were obtained shortly after by Wolff \cite{Wolffcone} (in the $L^2$-based bilinear case for the cone) and Tao \cite{Tao1} (in the $L^{2}$-based bilinear case for the paraboloid).

The goal of this section is to provide a multilinear analogue of Chen's Knapp example from \cite{Che16}, which was based on Hambrook and \L aba's work \cite{HL}. Generally speaking, if we were to directly adapt the ideas from \cite{Che16}, there would be \textit{four} steps in the argument in which one passes from an identity to an inequality, which in turn implies a loss of information. The result presented here contains one extra technical feature compared to previous works: we require the arithmetic structures embedded in the support of each measure to be `linearly independent'. The latter condition will prevent us from losing information in one particular step of the argument that we see as a manifestation of `transversality', \textit{reducing the number of lossy steps from four to three}.

\subsection{Setup, arithmetic structure and linear independence}

Given $\phi:[2,\infty)\rightarrow(0,\infty)$ satisfying $\lim_{t\rightarrow\infty}\phi(t)=\infty$ such that $\phi(2t)\leq\phi(t)+C$ (for instance, $\phi(t)=(\log{t})^{\varepsilon}$ for any $\varepsilon>0$), define

\begin{equation*}
\psi(N)=\lceil\phi(2^N)^{1/2}\rceil+2,
\end{equation*}
and
\begin{equation*}
    \Psi(N)=\prod_{n=1}^{N}\psi(n).
\end{equation*}

For each $N\in\mathbb{N}$ and each $1\leq m\leq k$, given $0<\beta_{m}\leq\alpha_{m}<1$, we will construct three sequences related to an arithmetic progression $W_{N,m}$.
\begin{itemize}
    \item $\{d_{N,m}\}_{N\geq 1}$ will be the sequence of positive common differences between two consecutive elements of $W_{N,m}$.
    \item $\{\tau_{N,m}\}_{N\geq 1}$ will encode the number of elements of each $W_{N,m}$, i.e. $|W_{N,m}|=\tau_{N,m}$.
    \item $\{t_{N,m}\}_{N\geq 1}$ will be a sequence that registers the cardinality of the endpoints $A_{N,m}$ of the Cantor set at level $N$ which will contain the arithmetic progression $W_{N,m}$.
\end{itemize}

In the construction of the Cantor-type sets $E_{m}$ below, we will select a set $A_{N,m}\subseteq[\psi(N)]$ with $t_{N,m}$ elements that also contains $W_{N,m}$. Due to these constraints, the following conditions will have to be satisfied:
\begin{equation}\label{cond2-090126}
    1+(\tau_{N,m}-1)d_{N,m}\leq\psi(N).
\end{equation}
and
\begin{equation}\label{cond3-090126}
    \tau_{N,m}\leq t_{N,m}\leq\psi(N).
\end{equation}

As we will see, for a fixed $1\leq m\leq k$, at each stage $N\in\mathbb{N}$ of the construction of the Cantor set $E_{m}$ with associated measure $\mu_{m}$, the progression
$$W_{N,m}= \left\{1, 1 + d_{N,m}, \cdots, 1 + (\tau_{N,m}-1)d_{N,m}\right\}$$
will generate the \textbf{arithmetically structured} set we need to embed inside $E_{m}$. The sequence $\{d_{N,m}\}_{N\geq 1}$ will be constructed in Lemma \ref{lma:dmExist-V2}, whereas the other two are as in Chen's work \cite{Che16}.

\subsubsection{\underline{Choosing $\tau_{N,m}$ and $t_{N,m}$}}\label{sec:choices} For a fixed $1\leq m\leq k$, we choose
\begin{equation}\label{ineq1-0140126}  t_{N,m}=\psi(N)^{\alpha_{m}}\theta_{N,m},
\end{equation}
where $\theta_{N,m}$ satisfies
\begin{equation*}
    \theta_{N,m}\in\left[\frac{1}{4},\frac{1}{2}\right]\cup[2,4]
\end{equation*}
and
\begin{equation}\label{ineq2-0140126}  
\psi(N)^{\alpha_{m}}\log{(8\Psi(N))}\approx\prod_{i=1}^{N-1}\theta_{i,m}
\end{equation}
for $N\geq N_{0}$ large enough such that \eqref{ineq2-0140126} holds for all $1\leq m\leq k$. Similarly, choose
\begin{equation}\label{ineq3-0140126}  \tau_{N,m}=\psi(N)^{\alpha_{m}-\frac{\beta_{m}}{2}}\vartheta_{N,m},
\end{equation}
where $\vartheta_{N,m}$ satisfies
\begin{equation*}
    \vartheta_{N,m}\in\left[\frac{1}{4},\frac{1}{2}\right]\cup[2,4]
\end{equation*}
and
\begin{equation}\label{ineq4-0140126}  
\psi(N+1)^{\alpha_{m}}\log{(8\Psi(N+1))}\approx\prod_{i=1}^{N}\vartheta_{i,m}
\end{equation}
for $N\geq N_{0}$ large enough. As shown in \cite{Che16}, these choices are possible and they guarantee that
\begin{equation*}
    \frac{\tau_{1,m}\cdot\ldots\cdot \tau_{N,m}}{t_{1,m}\cdot\ldots\cdot t_{N,m}}\approx\Psi(N)^{-\frac{\beta_{m}}{2}}
\end{equation*}
for $N\geq N_{0}$, which suffices to imply the Fourier decay claimed in \eqref{decaymum-140126}. For $N<N_{0}$, simply choose $\tau_{N,m}=t_{N,m}=1$. These sequences clearly satisfy \eqref{cond3-090126}.

\subsubsection{\underline{Linear independence}}\label{sec:LI} In this section we introduce a new definition and state some properties that we will need for the construction of the multilinear Knapp-type example. We will use a construction similar to the one in \cite{Che16}, with $k$ measures built containing arithmetically structured sets interacting tangentially in the following sense.
\begin{definition} Let $M$ be a positive integer. We say that the set $\{d_m\}_{m=1}^k$ is $M$-linearly independent ($M$-LI) if for all $\ell_i\in(-M,M)\cap\mathbb{Z}$,
    \begin{equation*}
        \sum_{m=1}^k \ell_m d_m=0
    \end{equation*}
    if and only if $\ell_m=0$ for all $m=1,\ldots k$.
\end{definition}
The following gives a simple way of constructing $M$-LI sets of integers.

\begin{lemma}\label{lemma-LI-V2} If for each $m=2,\ldots,k$
\begin{equation}\label{MLIcondition-V2}
    d_{m}> M\left(\sum_{i< m}d_{i}\right)
\end{equation}
then the set $\{d_{m}\}_{m=1}^{k}$ is $M$-LI.
\end{lemma}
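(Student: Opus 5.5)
Assume $\sum_{m=1}^k \ell_m d_m = 0$ with every $\ell_m\in(-M,M)\cap\mathbb{Z}$ and not all $\ell_m$ zero; we derive a contradiction. The $d_m$ are the positive common differences of Section~\ref{sec:choices}. Let $m_0$ be the largest index with $\ell_{m_0}\neq 0$; the relation then involves only $d_1,\ldots,d_{m_0}$.

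If $m_0=1$, the relation reads $\ell_1 d_1=0$. Since $d_1>0$, this forces $\ell_1=0$, a contradiction.

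If $m_0\geq 2$, isolate the top term and use the triangle inequality together with $|\ell_i|\leq M-1<M$:
\begin{equation*}
    d_{m_0}\leq |\ell_{m_0}|\, d_{m_0} = \Big|\sum_{i<m_0}\ell_i d_i\Big| \leq \sum_{i<m_0}|\ell_i|\, d_i \leq (M-1)\sum_{i<m_0} d_i \leq M\sum_{i<m_0} d_i.
\end{equation*}
The first inequality holds because $|\ell_{m_0}|\geq 1$, and the last step uses positivity of the $d_i$. This contradicts \eqref{MLIcondition-V2} at $m=m_0$, which says $d_{m_0}> M\sum_{i<m_0}d_i$.

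Hence every $\ell_m$ is zero. The converse direction is trivial, since if all $\ell_m=0$ the sum vanishes. Therefore $\{d_m\}_{m=1}^k$ is $M$-LI.

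There is no real obstacle here. The only points needing care are that the $d_m$ are positive, so that the isolated term is bounded below by $d_{m_0}$, and that the case $m_0=1$ is handled separately.
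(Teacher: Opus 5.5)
Your proof is correct and follows the same top-down argument as the paper. The paper shows $\ell_k=0$ by dividing by $\ell_k$ and then recurses, whereas you pass directly to the largest nonzero index $m_0$, and your explicit use of the positivity of the $d_i$ (including $d_1\neq 0$ for the case $m_0=1$) spells out what the paper's ``continue recursively'' leaves implicit.
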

\begin{proof} Let $\sum_{m=1}^k \ell_{m}d_m = 0$ with  $\ell_m\in (-M,M)\cap\mathbb{Z}$. We must have $\ell_{k}=0$, otherwise
$$d_{k}=\left|\sum_{i< k}\frac{\ell_{i}}{\ell_{k}}d_{i}\right|\leq M\left(\sum_{i< k}d_{i}\right)<d_{k},$$
which contradicts \eqref{MLIcondition-V2}. Continue recursively to show that $\ell_{m}=0$ for all $1\leq m\leq k$.
\end{proof}

\subsubsection{\underline{Choosing $d_{N,m}$}} The next lemma guarantees the maximum number of distinct sums between the different rescaled arithmetic progressions, which can be interpreted as a manifestation of transversality in this context. As mentioned in the beginning of this section, there is one extra technical feature in the Knapp example presented in this manuscript compared to previous works. The common differences of the arithmetic progressions present in \cite{HL} and \cite{Che16} play no role in their arguments, whereas we need to carefully choose them to avoid losing information here.

\begin{lemma}\label{lma:dmExist-V2} For every $1\leq m\leq k$, let $\alpha_{m}$ and $\beta_{m}$ be given with
\begin{equation}
    0<\beta_{m}\leq\alpha_{m}\leq 1,
\end{equation}
\begin{equation}\label{eq:conditiona-b}
    \alpha_{j+1}-\frac{\beta_{j+1}}{2}\leq\alpha_{j}-\frac{\beta_{j}}{2},\quad\textnormal{for all} \quad 1\leq j\leq k-1,
\end{equation}
and
\begin{equation}\label{cond3-ajbj-14012026}
       \left(\alpha_{k}-\frac{\beta_{k}}{2}\right)+(k-1)\left(\alpha_{1}-\frac{\beta_{1}}{2}\right)<1.
    \end{equation}
Then there exist $N_0\in\mathbb{N}$ such that
\begin{enumerate}
    \item For all $N\geq N_{0}$, for
    \begin{equation*}
        \tau_{N}:=\max_{1\leq m\leq k}\tau_{N,m},
    \end{equation*}
    and
    \begin{equation*}
        M_{N}=\left\lceil \frac{1}{\sum_{m=1}^k \beta_m} \right\rceil(\tau_{N}-1)+1,
    \end{equation*}
    there exists an $M_{N}$-LI set $\{d_{N,m}\}_{m=1}^k$ satisfying \eqref{cond2-090126}.
\item For all $N\geq N_{0}$, the arithmetic progressions 
\begin{equation*}
    V_{N,m}= \left\{1, 1 + d_{N,m}, \cdots, 1 + (M_{N}-1)d_{N,m}\right\}
\end{equation*}
satisfy 
\begin{equation}\label{cond-sum-V2}
    |V_{N,1}+\ldots +V_{N,k}|=\prod_{m=1}^{k}|V_{N,m}|.
\end{equation}
\end{enumerate}
\end{lemma}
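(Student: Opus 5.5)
We will build the common differences greedily via Lemma~\ref{lemma-LI-V2}, and then show that part (2) follows from $M_N$-linear independence. Fix $N$ and write $\tau_N=\max_m\tau_{N,m}$ and $c=\lceil 1/\sum_m\beta_m\rceil$, so that $M_N=c(\tau_N-1)+1\lesssim \tau_N$.

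\textbf{Choice of the $d_{N,m}$.} We define them recursively by
\begin{equation*}
d_{N,1}=1,\qquad d_{N,m}=M_N\sum_{i<m}d_{N,i}+1\quad (2\le m\le k).
\end{equation*}
By induction, $d_{N,m}=(M_N+1)^{m-1}$. Lemma~\ref{lemma-LI-V2} then applies directly, so $\{d_{N,m}\}_{m=1}^k$ is $M_N$-LI.

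\textbf{Verifying \eqref{cond2-090126}.} We need $1+(\tau_{N,m}-1)d_{N,m}\le\psi(N)$ for each $m$. Since $d_{N,m}\lesssim M_N^{m-1}\lesssim \tau_N^{m-1}$, it suffices that $\tau_{N,m}\tau_N^{m-1}\ll\psi(N)$. By \eqref{ineq3-0140126} and the ordering \eqref{eq:conditiona-b}, $\tau_N\lesssim\psi(N)^{\alpha_1-\beta_1/2}$ and $\tau_{N,m}\lesssim\psi(N)^{\alpha_m-\beta_m/2}$. This gives
\begin{equation*}
\tau_{N,m}\tau_N^{m-1}\lesssim \psi(N)^{(\alpha_m-\beta_m/2)+(m-1)(\alpha_1-\beta_1/2)}.
\end{equation*}
The exponent is largest when $m=k$: increasing $m$ by one adds $\alpha_1-\beta_1/2$ and changes $\alpha_m-\beta_m/2$ by at most that amount. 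So the worst case is $\psi(N)^{(\alpha_k-\beta_k/2)+(k-1)(\alpha_1-\beta_1/2)}$, up to constants depending only on $k$, $c$ and the bounds $\vartheta_{N,m}\le4$.

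This bound is where the real content lies, and it is the step I expect to be the main obstacle. Condition \eqref{cond3-ajbj-14012026} makes the exponent strictly less than $1$. Since $\psi(N)\to\infty$, the inequality holds for all $N\ge N_0$ with $N_0$ large enough, also absorbing the constants. The delicate points are tracking the constants honestly and making sure the $m=k$ case, rather than the naive $\tau_N^{k}$ bound, is what is needed.

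\textbf{Part (2).} Write each sum element in $V_{N,1}+\cdots+V_{N,k}$ as $k+\sum_m j_m d_{N,m}$ with $0\le j_m\le M_N-1$. If two such representations give the same element, subtracting them yields $\sum_m(j_m-j_m')d_{N,m}=0$ with $|j_m-j_m'|\le M_N-1<M_N$. By $M_N$-linear independence, $j_m=j_m'$ for every $m$. Hence the map $(j_1,\dots,j_k)\mapsto$ sum is injective, and \eqref{cond-sum-V2} follows, since $|V_{N,m}|=M_N$. Equivalently, the $d_{N,m}$ form a mixed-radix base, so this is just uniqueness of base-$(M_N+1)$ digits.
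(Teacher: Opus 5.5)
Your proposal is correct and follows the paper's proof. It uses the same recursive choice $d_{N,m}=M_N\sum_{i<m}d_{N,i}+1$ fed into Lemma~\ref{lemma-LI-V2}, the same exponent comparison reducing \eqref{cond2-090126} to \eqref{cond3-ajbj-14012026}, and the same injectivity argument for \eqref{cond-sum-V2}. The only differences are cosmetic: you start from $d_{N,1}=1$ (giving $(M_N+1)^{m-1}$ instead of the paper's $(2M_N+1)(M_N+1)^{m-2}$), and you spell out two points the paper leaves implicit, namely why $(\alpha_m-\beta_m/2)+(m-1)(\alpha_1-\beta_1/2)$ is maximised at $m=k$ and why the coefficient differences lie in $(-M_N,M_N)$.
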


\begin{proof}
   Choose $d_{N,m}$ satisfying $d_{N,1}=2$ and

\begin{equation*}
    d_{N,m}=M_{N}\left(\sum_{j=1}^{m-1}d_{N,j}\right)+1.
\end{equation*}
That is, each $d_{N,m}$ is of the form
    \begin{equation*}
        d_{N,m} = \left(2M_{N}+1\right)\left(M_{N}+1\right)^{m-2}.
    \end{equation*}
By Lemma \ref{lemma-LI-V2}, $\{d_{N,m}\}_{m=1}^k$ is $M_N$-LI. To prove \eqref{cond2-090126}, it suffices to show that 
\begin{equation}\label{cond1-14012026}
    1+(\tau_{N,m}-1)d_{N,m}=1+(\psi(N)^{\alpha_{m}-\frac{\beta_{m}}{2}}\vartheta_{N,m}-1)\left(2M_{N}+1\right)\left(M_{N}+1\right)^{m-2}\leq\psi(N)
\end{equation}
for $N$ large enough. By \eqref{eq:conditiona-b}, and our choices of $\tau_{N,m}$ and $M_{N}$, \eqref{cond1-14012026} holds for $N>N_{0}$ large enough as long as
    \begin{equation*}
       \left(\alpha_{m}-\frac{\beta_{m}}{2}\right)+(m-1)\left(\alpha_{1}-\frac{\beta_{1}}{2}\right)<1
    \end{equation*}
for all $m=2,\ldots,k$, which follows from \eqref{cond3-ajbj-14012026}. 

To prove \eqref{cond-sum-V2}, note that there are at most $\prod_{m=1}^{k}|V_{N,m}|$ different sums in $V_{N,1}+\ldots +V_{N,k}$. If two sums in $V_{N,1}+\ldots +V_{N,k}$ coincide, i.e. if there are two different sets of coefficients $\{\ell_{j}\}_{j=1}^{k},\{\widetilde{\ell}_{j}\}_{j=1}^{k}\subseteq(-M,M)\cap\mathbb{Z}$ such that
$$\sum_{j=1}^{k}(1+\ell_{j}d_{N,j})=\sum_{j=1}^{k}(1+\widetilde{\ell_{j}}d_{N,j}),$$
then of course
$$\sum_{j=1}^{k}(\ell_{j}-\widetilde{\ell_{j}})d_{j}=0.$$

Since the set $\{d_{m}\}_{m=1}^{k}$ is $M$-LI, we must have $\ell_{j}-\widetilde{\ell_{j}}=0$, a contradiction. We conclude that any two sums in $V_{N,1}+\ldots +V_{N,k}$ are different, which immediately implies \eqref{cond-sum-V2}.  
\end{proof}

\begin{definition}
    For each $m=1,\ldots,k$, let $P_m$ be an arithmetic progression with ratio $d_m$. We say that the arithmetic progressions $\{P_m\}_{m=1}^k$ are $M$-linearly independent ($M$-LI) if the corresponding set of ratios $\{d_m\}_{m=1}^k$ is $M$-LI.
\end{definition}

\subsection{Construction of Cantor sets}\label{sec:CantorConstruction} Given $0<\beta_{m}\leq\alpha_{m}<1$, let $\{\tau_{N,m}\}_{N\in\mathbb{N}}$ and $\{t_{N,m}\}_{N\in\mathbb{N}}$ be the two sequences chosen in \eqref{ineq1-0140126} and \eqref{ineq3-0140126}.

Consider a sequence of sets $\{A_{N,m}\}_{N\in\mathbb{N}_0}$ defined in the following way
\begin{align*}
    A_{0,m} &= \{0\},\\
    A_{N,a,m} &\subseteq [\psi(N)]/\Psi(N),\\
    A_{N,m} &= \bigcup_{a\in A_{N-1,m}} ( a + A_{N,a,m}),\\
    |A_{N,a,m}|&=t_{N,m}.
\end{align*}
Define also
\begin{equation*}
   \displaystyle E_{N,m} = \bigcup_{a\in A_{N,m}} a + [0,1/\Psi(N)], \quad E_m = \bigcap_{N=1}^\infty E_{N,m}.
\end{equation*}
$E_m$ is a Cantor-type set with a natural probability measure $\mu_m$ defined as the weak limit of 
$$\frac{\d\mu_{N,m}}{\d x} = \frac{1}{|E_{N,m}|}1_{E_{N,m}}(t)\,\mathrm{d}t.$$

We now continue with the construction by identifying the arithmetically structured sets $F_{N,m}\subseteq E_{N,m}$. For each $N$ large enough, let $\{d_{N,m}\}_{m=1}^k$ be the $\Big(\left\lceil \frac{1}{\sum_{m=1}^k \beta_m} \right\rceil(\tau_{N}-1)+1\Big)$-LI set given by Lemma \ref{lma:dmExist-V2}. For each $m=1,\ldots,k$ let $W_{N,m}\subseteq [\psi(N)]$ be the following arithmetic progression of length $\tau_{N,m}$ and common difference $d_{N,m}$:
\begin{equation*}
    W_{N,m} := \{1, 1 + d_{N,m}, \ldots, 1 + (\tau_{N,m}-1)d_{N,m}\}.
\end{equation*}
Define the sequences of sets $P_{N,m}\subseteq A_{N,m}$, $F_{N,m}\subseteq E_{N,m}$ as
\begin{align*}
    P_{0,m}&=\{0\},\\
    P_{N+1,m} &= \bigcup_{a\in P_{N,m}} (a + 
    W_{N,m}/\Psi(N)),\\
    F_{N,m} &= \bigcup_{a\in P_{N,m}} a + [0,1/\Psi(N)).
\end{align*}
Note that $F_{N,m}\subseteq E_{N,m}$ are smaller Cantor-type sets with endpoints in a rescaled arithmetic progression. Finally, define for each $\ell\in\mathbb{N}$
\begin{equation*}
    f_{\ell,m} = 1_{F_{\ell,m}}.
\end{equation*}
These functions will play the role that the characteristic function of a small cap plays in the standard Knapp example.

The construction above is similar to the one given in \cite{Che16}, with the additional steps of starting the construction of the sets $P_{N,m}$ with an arithmetic progression of a given common difference $d_{N,m}$, and choosing them satisfying an appropriate linear independence property. The following proposition ensures that the measures $\mu_{N,m}$ have the properties that we need. 
\begin{proposition}\label{prop:propertiesChen}
Let $\phi:[2,\infty)\to(0,\infty)$ be a non-decreasing function with $\lim_{t\to\infty}\phi(t)=\infty$ and $\phi(2t)\leq\phi(t)+C$. For each $m=1,\ldots k$ there is a choice $A_{N,m}$ for every $N\in\mathbb{N}$ with the above properties such that for $0<\beta_m\leq\alpha_m$,
    \begin{equation}\label{decaymum-140126}
        \big|\widehat{\d\mu_m}(\xi)\big| \lesssim |\xi|^{-\beta_m/2}
    \end{equation}
    for all $\xi\in\mathbb{R}\backslash\{0\}$, and for all intervals $I$ centred in $E_m$ with $|I|<1/2$,    \begin{equation}\label{eq:ChensAhlfors}
        \frac{|I|^{\alpha_m}}{\phi(1/|I|)\log(1/|I|)} \lesssim \mu(I) \lesssim \frac{|I|^{\alpha_m}}{\log(1/|I|)}.
    \end{equation}
    What is more, if $\beta_m<\alpha_m$ the measure $\mu_m$ satisfies
    \begin{equation*}
        \frac{|I|^{\alpha_m}}{\phi(1/|I|)} \lesssim \mu(I) \lesssim |I|^{\alpha_m}
    \end{equation*}
    instead of \eqref{eq:ChensAhlfors}.
\end{proposition}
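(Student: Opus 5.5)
This is essentially Chen's construction from \cite{Che16}, built on Hambrook--{\L}aba \cite{HL}, run independently for each $m$. The only new ingredient is the prescribed arithmetic progression $W_{N,m}\subseteq A_{N,a,m}$, and the plan is to check that it does not disturb the randomisation. Fix $m$. At each stage $N\geq N_0$ and for each $a\in A_{N-1,m}$, I will choose $A_{N,a,m}=W_{N,m}\cup R_{N,a,m}$ when $a\in P_{N-1,m}$, and $A_{N,a,m}=R_{N,a,m}$ otherwise. Here $R_{N,a,m}$ is a uniformly random subset of $[\psi(N)]\setminus W_{N,m}$ of size $t_{N,m}-\tau_{N,m}$ (or of size $t_{N,m}$ in the second case), and the random choices are independent over $a$. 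This requires $\tau_{N,m}\leq t_{N,m}\leq\psi(N)$, which is \eqref{cond3-090126}, and $W_{N,m}\subseteq[\psi(N)]$, which is \eqref{cond2-090126} and is guaranteed by Lemma~\ref{lma:dmExist-V2}. For $N<N_0$ we take singletons, which only affects constants.

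For the Fourier decay I will follow the {\L}aba--Pramanik/Hambrook--{\L}aba scheme. Write $\widehat{\mu_{N,m}}-\widehat{\mu_{N-1,m}}$ as a sum over $a\in A_{N-1,m}$ of the mean-zero random variables
\begin{equation*}
X_a(\xi)=\frac{1}{|A_{N,m}|}\sum_{b\in A_{N,a,m}}e^{-2\pi i(a+b)\xi}-\mathbb{E}[\,\cdot\,],
\end{equation*}
multiplied by the Fourier transform of the normalised interval indicator. The deterministic contribution of $W_{N,m}$ to the sum is bounded by
\begin{equation*}
\frac{\#P_{N-1,m}\,\tau_{N,m}}{|A_{N,m}|}\approx\frac{\tau_{1,m}\cdots\tau_{N,m}}{t_{1,m}\cdots t_{N,m}}\approx\Psi(N)^{-\beta_m/2},
\end{equation*}
using the choices \eqref{ineq1-0140126}--\eqref{ineq4-0140126}. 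This is exactly the bound $|\xi|^{-\beta_m/2}$ at frequencies $|\xi|\approx\Psi(N)$. The random part will be handled by Bernstein/Hoeffding concentration on a grid of $\xi$ with $|\xi|\lesssim\Psi(N)^{2}$, followed by a union bound. This gives the bound $\sqrt{\log(8\Psi(N))/|A_{N,m}|}$, which is $\lesssim\Psi(N)^{-\alpha_m/2}\leq\Psi(N)^{-\beta_m/2}$ because $|A_{N,m}|\approx\Psi(N)^{\alpha_m}\log(8\Psi(N))$ by \eqref{ineq2-0140126}. The factor $\log(8\Psi(N))$ is included precisely to absorb the union bound. The probabilities are summable in $N$, so Borel--Cantelli gives an admissible realisation. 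I will then sum the telescoping differences over $N$, using the decay of the interval transforms at large frequencies, to obtain \eqref{decaymum-140126}.

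For the ball estimates \eqref{eq:ChensAhlfors}, the counting is deterministic. An interval $I$ with $\Psi(N+1)^{-1}\leq|I|<\Psi(N)^{-1}$ centred in $E_m$ meets at most two level-$N$ intervals, each of mass $|A_{N,m}|^{-1}$. It contains on average about $|I|\Psi(N+1)\,t_{N+1,m}/\psi(N+1)$ of their children. Since $A_{N+1,a,m}$ is a random set of density $t_{N+1,m}/\psi(N+1)$, I will show with high probability that every subinterval of $[\psi(N+1)]$ contains the expected number of points up to a logarithmic error, again by Chernoff plus a union bound. The prescribed progression adds at most $\tau_{N+1,m}\ll t_{N+1,m}$ points, so it is harmless. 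Combining this with $|A_{N,m}|\approx\Psi(N)^{\alpha_m}\log\Psi(N)$ gives both sides of \eqref{eq:ChensAhlfors}. The loss of $\phi(1/|I|)$ in the lower bound comes from the factor $\psi(N)\approx\phi(2^N)^{1/2}$ when $|I|$ sits at the bottom of a scale. When $\beta_m<\alpha_m$, the $\log$ factor can be removed by choosing $t_{N,m}$ without it, because the surplus in $\alpha_m-\beta_m$ then pays for the union bound.

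The main obstacle is the Fourier decay step, specifically making the deterministic progression contribution and the random contribution both hit $\Psi(N)^{-\beta_m/2}$ uniformly in $\xi$. The deterministic sum over the nested progressions $P_{N,m}$ has no cancellation at frequencies that are multiples of $\Psi(N)/d_{N,m}$, and for $N<N_0$ the whole sum is treated as finitely many bounded terms, so everything rests on the counting identity above. I will defer to \cite[Section 3]{Che16} for the summation over $N$ of the tail estimates, noting that the arguments there never use the value of the common difference, so the choice of $d_{N,m}$ from Lemma~\ref{lma:dmExist-V2} changes nothing.
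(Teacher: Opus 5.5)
Your Fourier-decay paragraph follows the {\L}aba--Pramanik/Chen scheme that the paper invokes by citing \cite{Che16}. Your remark that the common differences $d_{N,m}$ never enter is exactly the paper's justification. One index correction is needed. By \eqref{ineq2-0140126}, $|A_{N-1,m}|=\Psi(N-1)^{\alpha_m}\prod_{i<N}\theta_{i,m}\approx\Psi(N)^{\alpha_m}\log(8\Psi(N))$. So it is $|A_{N-1,m}|$, the number of independent parents, each contributing $\lesssim|A_{N-1,m}|^{-1}$, that has this size. Hoeffding then gives $\sqrt{\log(8\Psi(N))/|A_{N-1,m}|}\approx\Psi(N)^{-\alpha_m/2}$, so your conclusion stands. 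Consequently $|A_{N,m}|\approx\Psi(N+1)^{\alpha_m}\log(8\Psi(N+1))$, not $\Psi(N)^{\alpha_m}\log(8\Psi(N))$.

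The ball-condition paragraph has a genuine gap, for two reasons.

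First, the Chernoff-plus-union-bound step gives nothing. A union over $\gtrsim|A_{N,m}|$ parents forces an additive error $\gtrsim\log\Psi(N+1)\geq (N+1)\log 3$. But a whole block contains only $t_{N+1,m}\lesssim\psi(N+1)^{\alpha_m}\lesssim N^{\alpha_m/2}$ points, since $\phi(2t)\leq\phi(t)+C$ forces $\phi(2^N)\lesssim N$. So the error already exceeds the trivial bound.

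Second, $\tau_{N+1,m}\ll t_{N+1,m}$ is not why the progression is harmless, because local density is what matters. Since $d_{N+1,1}=2$, $W_{N+1,1}$ packs $\tau_{N+1,1}$ children into $2\tau_{N+1,1}$ consecutive slots. Under your normalization, each level-$(N+1)$ interval $J$ would carry mass $\approx|J|^{\alpha_m}/\log(1/|J|)$ with no slack. An interval of length $2\tau_{N+1,1}/\Psi(N+1)$ centred on that cluster would then have mass about $\tau_{N+1,1}^{1-\alpha_1}\to\infty$ times $|I|^{\alpha_1}/\log(1/|I|)$, violating the upper bound.

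The correct argument is deterministic, which is what the paper means by saying \eqref{eq:ChensAhlfors} depends only on $t_{N,m}$.
\begin{itemize}
\item Upper bound: if $\Psi(N+1)^{-1}\leq|I|<\Psi(N)^{-1}$, then $I$ meets at most two level-$N$ intervals. Hence $\mu_m(I)\leq 2|A_{N,m}|^{-1}\approx\Psi(N+1)^{-\alpha_m}/\log(8\Psi(N+1))\lesssim|I|^{\alpha_m}/\log(1/|I|)$, for every choice of sets $A_{N,a,m}$ of the prescribed size.
\item Lower bound: if $2\Psi(N)^{-1}\leq|I|<2\Psi(N-1)^{-1}$, then $I$ contains the level-$N$ interval through its centre, so $\mu_m(I)\geq|A_{N,m}|^{-1}$. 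The resulting loss is $(\psi(N)\psi(N+1))^{\alpha_m}\lesssim\phi(2^{N+1})\lesssim\phi(1/|I|)$, using $\phi(2t)\leq\phi(t)+C$, and $\log(8\Psi(N+1))\approx\log(1/|I|)$.
\end{itemize}
No randomness and no property of $W_{N,m}$ is needed for \eqref{eq:ChensAhlfors}.
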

\begin{proof}
    The differences in the construction of Section~\ref{sec:CantorConstruction} do not affect the results in Section~4.4 and 4.5 of \cite{Che16}. Indeed, the key estimate in \cite{Che16} to prove a Fourier decay inequality like our \eqref{decaymum-140126} depends only on the parameters $\tau_{N,m}$ and $t_{N,m}$, which we choose in the same way. As for \eqref{eq:ChensAhlfors}, it depends only on the choice of $t_{N,m}$. 
\end{proof}

\subsection{Proof of Theorem \ref{nec-sing}}\label{sec:proofProp} We start by restating Theorem~\ref{nec-sing} with more details for clarity.
\begin{restatetheorem}{nec-sing} For each $m=1,\ldots,k$ let $0\leq\beta_m\leq\alpha_m \leq 1$ be such that
\begin{equation}
    \alpha_{j+1}-\frac{\beta_{j+1}}{2}\leq\alpha_{j}-\frac{\beta_{j}}{2},\quad\textnormal{for all} \quad 1\leq j\leq k-1,
\end{equation}
and
\begin{equation}
       \left(\alpha_{k}-\frac{\beta_{k}}{2}\right)+(k-1)\left(\alpha_{1}-\frac{\beta_{1}}{2}\right)<1.
    \end{equation}
Let $\mu_m$ be the measures defined above, where the corresponding arithmetic progressions $\{W_{N,m}\}_{m=1}^k$ are $\Big(\left\lceil \frac{1}{\sum_{m=1}^k \beta_m} \right\rceil(\tau_{N}-1)+1\Big)$-LI for each $N$ large enough. Let $\{f_{\ell,m}\}_{\ell\in\mathbb{N}} = \{1_{F_{\ell,m}}\}_{\ell\in\mathbb{N}}$. Assume that $p,q\in[1,\infty]$ satisfy
    \begin{equation*}
        q < \frac{2p(1-\sum_{m=1}^k\alpha_m) + p\sum_{m=1}^k\beta_m}{(p-1)\sum_{m=1}^{k}\beta_m}
    \end{equation*}
    Then
    \begin{equation*}
    \frac{\bigg\|\prod_{m=1}^{k} \reallywidehat{f_{\ell,m}\d\mu_m}\bigg\|_{L^q(\mathbb{R})}}{\prod_{m=1}^{k}\|f_{\ell,m}\|_{L^p(\d\mu_m)}} \to\infty \text{ as }\ell\to\infty.
    \end{equation*}
\end{restatetheorem}
\begin{proof}
Throughout this proof we will assume that $N$ is large enough as needed in Lemma~\ref{lma:dmExist-V2} and subsection~\ref{sec:choices}. First note that 
\begin{equation*}
\|\widehat{f_{\ell,m}\d\mu_m}\|_{L^\infty(\mathbb{R})} = \mu_m(F_{\ell,m}) = \prod_{i=1}^\ell \frac{\tau_{i,m}}{t_{i,m}}.
\end{equation*}
    Now we continue with some reductions. Let $r=\lceil \frac{1}{\sum_{m=1}^k \beta_m}\rceil$, then for $1\leq q\leq 2r$,
    \begin{equation} \label{eq:LqLrnorms}
        \eqalign{
        \displaystyle\Bigg\|\prod_{m=1}^{k}\widehat{f_{\ell,m} \d\mu_{m}}\Bigg\|_{L^{q}(\mathbb{R})}^{q}&\displaystyle\geq \frac{ \|\prod_{m=1}^{k}\widehat{f_{\ell,m}\d\mu_{m}}\|_{L^{2r}(\mathbb{R})}^{2r}}{\prod_{m=1}^{k}\| \widehat{f_{\ell,m}\d\mu_{m}}\|_{L^\infty(\mathbb{R})}^{2r-q}} \cr
        &\displaystyle\gtrsim \prod_{i=1}^\ell \prod_{m=1}^k \Big(\frac{t_{i,m}}{\tau_{i,m}}\Big)^{2r-q}  \Bigg\|\prod_{m=1}^{k}\widehat{f_{\ell,m} \d\mu_{m}}\Bigg\|_{L^{2r}(\mathbb{R})}^{2r}.
        }
    \end{equation}
    The strategy will be to use the fact that $2r\in 2\mathbb{Z}$ and find good lower bounds for 
    \begin{equation*}
        \left\|\prod_{m=1}^{k} \widehat{f_{\ell,m}\d\mu_{m}}\right\|_{L^{2r}(\mathbb{R})}
    \end{equation*}
by expanding the $L^{2r}(\mathbb{R})$ norm. Since $r>\frac{1}{\sum_{m=1}^k \beta_m}$, by Proposition~\ref{prop:propertiesChen} $\big|\widehat{f_{\ell,m}\d\mu_{N,m}}(\xi)\big|\lesssim g\in L^{2r}(\mathbb{R})$, and since $\widehat{f_{\ell,m}\d\mu_{N,m}} \to \widehat{f_{\ell,m}\d\mu_{m}}$ pointwise by the Portmanteau's theorem, we conclude that
\begin{equation*}
    \|\widehat{f_{\ell,m}\d\mu_{N,m}}\|_{L^{2r}(\mathbb{R})}\rightarrow\|\widehat{f_{\ell,m}\d\mu_{m}}\|_{L^{2r}(\mathbb{R})},\quad\textnormal{as }N\rightarrow\infty.
\end{equation*}
Arguing as in \cite[Section~4.6]{Che16}, we know that for each $m=1,\ldots,k$,
\begin{equation*}
    \|f_{\ell,m}\|_{L^{p}(\d\mu_{N,m})} = \prod_{i=1}^\ell \Big( \frac{\tau_{i,m}}{t_{i,m}} \Big)^{1/p}.
\end{equation*}
Let $\mathcal{A}_{\ell,N,m} = F_{\ell,m}\cap A_{N,m}$. Recall that the $A_{N,m}$ are finite, therefore so are the $\mathcal{A}_{\ell,N,m}$
\begin{equation*}
    f_{\ell,m}\d\mu_{N,m} = \frac{1}{t_{1,m}\ldots t_{N,m}} \Big( \sum_{a\in \mathcal{A}_{\ell,N,m}} \delta_a \Big)*\Psi(N)1_{[0,1/\Psi(N))}.
\end{equation*}
Therefore, by developing the $L^{2r}$ norms and by changing variables,
\begin{align*}
    & \Bigg\|\prod_{m=1}^{k}\widehat{f_{\ell,m}\d\mu_{N,m}}\Bigg\|_{L^{2r}(\mathbb{R})}^{2r} \\
  &= \left(\prod_{m=1}^{k} \frac{1}{t_{1,m}\cdots t_{N,m}}\right)^{2r} \int_{\mathbb{R}} \sinc\Big( \frac{\xi}{\Psi(N)} \Big)^{2kr} \Bigg| \sum_{a_{1}\in \mathcal{A}_{\ell,N,1}}\cdots\sum_{a_{k}\in \mathcal{A}_{\ell,N,k}}  e^{-2\pi i \xi(a_{1}+\cdots+a_{k})} \Bigg|^{2r} \,\d\xi \\
  &= \left(\prod_{m=1}^{k} \frac{1}{t_{1,m}\cdots t_{N,m}}\right)^{2r} \int_{\mathbb{R}} \sinc\Big( \frac{\xi}{\Psi(N)} \Big)^{2kr} \\
  &\qquad\times\left( \sum_{\substack{a_{n,1}\in \mathcal{A}_{\ell,N,1} \\ 1\leq n\leq 2r }}\cdots\sum_{\substack{a_{n,k}\in \mathcal{A}_{\ell,N,k} \\ 1\leq n\leq 2r }} e^{-2\pi i \xi\Big( \sum_{n=1}^r\sum_{m=1}^{k} (a_{n,m}-a_{n+r,m})\Big)}\right) \,\d\xi \\
  &= \left(\prod_{m=1}^{k} \frac{1}{t_{1,m}\cdots t_{N,m}}\right)^{2r} \sum_{\substack{a_{n,1}\in \mathcal{A}_{\ell,N,1} \\ 1\leq n\leq 2r }}\cdots\sum_{\substack{a_{n,k}\in \mathcal{A}_{\ell,N,k} \\ 1\leq n\leq 2r }}  \int_{\mathbb{R}}  \sinc\Big( \frac{\xi}{\Psi(N)} \Big)^{2kr}  \\
  &\qquad \times e^{-2\pi i \xi\Big( \sum_{n=1}^r\sum_{m=1}^{k} (a_{n,m}-a_{n+r,m})\Big)} \,\d\xi \\
   &= \left(\prod_{m=1}^{k} \frac{1}{t_{1,m}\cdots t_{N,m}}\right)^{2r} \Psi(N)\sum_{\substack{a_{n,1}\in \Psi(N)\mathcal{A}_{\ell,N,1} \\ 1\leq n\leq 2r }}\cdots\sum_{\substack{a_{n,k}\in \Psi(N)\mathcal{A}_{\ell,N,k} \\ 1\leq n\leq 2r }}  \int_{\mathbb{R}}  \sinc(\eta)^{2kr}  \\
  &\qquad \times e^{-2\pi i \eta\Big( \sum_{n=1}^r\sum_{m=1}^{k}( a_{n,m}-a_{n+r,m})\Big)} \,\d\eta.
\end{align*}
Let $K(\eta) = \sinc(\eta)^{2kr}$, then
\begin{align*}
    \Bigg\|\prod_{m=1}^{k}\widehat{f_{\ell,m} \d\mu_{N,m}}\Bigg\|_{L^{2r}(\mathbb{R})}^{2r} &= \left(\prod_{m=1}^{k} \frac{1}{t_{1,m}\cdots t_{N,m}}\right)^{2r}\Psi(N) \\
    &~\quad\times\sum_{\substack{a_{n,1}\in \Psi(N)\mathcal{A}_{\ell,N,1} \\ 1\leq n\leq 2r }}\cdots\sum_{\substack{a_{n,k}\in \Psi(N)\mathcal{A}_{\ell,N,k} \\ 1\leq n\leq 2r }} \widehat{K}\Big( \sum_{n=1}^r\sum_{m=1}^{k} (a_{n,m}-a_{n+r,m} )\Big).
\end{align*}

The following is the first step at which we potentially lose some information. We bound this sum from below by only keeping the terms that give $\widehat{K}(0)$, thus
\begin{equation*}
    \Bigg\|\prod_{m=1}^{k}\widehat{f_{\ell,m} \d\mu_{N,m}}\Bigg\|_{L^{2r}(\mathbb{R})}^{2r} \geq \left(\prod_{m=1}^{k} \frac{1}{t_{1,m}\cdots t_{N,m}}\right)^{2r}\Psi(N)\sum_{\substack{a_{n,1}\in \Psi(N)\mathcal{A}_{\ell,N,1}\\
    \vdots
    \\
    a_{n,k}\in \Psi(N)\mathcal{A}_{\ell,N,k} \\ 
    1\leq n\leq 2r\\
    \sum_{n=1}^r\sum_{m=1}^{k} a_{n,m}-a_{n+r,m}=0}}  \widehat{K}(0).
\end{equation*}

Let $C(2kr) = \widehat{K}(0) = \int_{\mathbb{R}}\sinc{(\eta)}^{2kr}\d\eta$. This gives
\begin{align}\label{ineq1-220126}
 \Bigg\|\prod_{m=1}^{k}\widehat{f_{\ell,m}\d\mu_{N,m}}\Bigg\|_{L^{2r}(\mathbb{R})}^{2r} &\geq C(2kr)\Psi(N)\left(\prod_{m=1}^{k}\frac{1}{t_{1,m}\cdots t_{N,m}}\right)^{2r}\\
 &~\quad \times \Bigg| \Bigg\{ \parbox{27em}{\begin{center}
      $(a_{1,1},\ldots,a_{2r,1}, \cdots,a_{1,k},\ldots,a_{2r,k})\in \prod_{m=1}^{k}(\mathcal{A}_{\ell,N,m})^{2r}$ \\
      such that\\$\sum_{n=1}^r\sum_{m=1}^{k} a_{n,m}=\sum_{n=1}^r\sum_{m=1}^{k}a_{n+r,m}$
    \end{center}} \Bigg\}\Bigg|
\end{align}
by homogeneity of the relation $\sum_{n=1}^r\sum_{m=1}^{k} a_{n,m}=\sum_{n=1}^r\sum_{m=1}^{k}a_{n+r,m}$. For each $\ell,N\in\mathbb{N}$ define
\begin{equation*}
    M_{\ell,N,r}=\Bigg|\Bigg\{ \parbox{25em}{\begin{center}
      $(a_{1,1},\ldots,a_{2r,1}, \cdots,a_{1,k},\ldots,a_{2r,k})\in \prod_{m=1}^{k} (\mathcal{A}_{\ell,N,m})^{2r}$ \\
      such that\\$\sum_{n=1}^r\sum_{m=1}^{k} a_{n,m}=\sum_{n=1}^r\sum_{m=1}^{k}a_{n+r,m}$
    \end{center}} \Bigg\}\Bigg|.
\end{equation*}
We want to bound $M_{\ell,N,r}$ from below. For this, define for each $\ell,N\in\mathbb{N}$,
\begin{equation*}
    Z_{\ell,N,r}=\left\{\sum_{n=1}^{r}\sum_{m=1}^{k}a_{n,m} : a_{n,m}\in \mathcal{A}_{\ell,N,m}, \quad\forall~ 1\leq n\leq r\right\}.
\end{equation*}
Observe that $y_m\in \mathcal{A}_{\ell,N,m}$ has a digit representation
\begin{equation*}
    y_m = \sum_{i=1}^\ell \frac{y_m^{(i)}}{\Psi(i)} + \frac{y_m^{(\ell+1)}}{\Psi(N)},
\end{equation*}
where $y_m^{(i)}\in W_{i,m}$ for $i=1,\ldots,\ell$ and $W_{i,m} = \{1, 1 + d_{i,m},\ldots,1 + (\tau_{N,m}-1)d_{N,m}\}$ with $\{d_{N,m}\}_{m=1}^k$ the $\Big(\left\lceil \frac{1}{\sum_{m=1}^k \beta_m} \right\rceil(\tau_{N}-1)+1\Big)$-LI set; and $y_m^{(\ell+1)}\in [\psi(\ell+1)\cdots\psi(N)]$. Then, each $z\in Z_{\ell,N,r}$ can be written as 
\begin{equation*}
    z = \sum_{i=1}^\ell \frac{z^{(i)}}{\Psi(i)} + \frac{z^{(\ell+1)}}{\Psi(N)}
\end{equation*}
with $z^{(i)}\in W_{i,1}' +\cdots + W_{i,k}'$ for $i=1,\ldots,\ell$ and $W_{j,m}' = \{r, r + d_{i,m}, \ldots, r+ r(\tau_{i,m}-1)d_{i,m}\}$; and $z^{(\ell+1)}\in[kr\psi(\ell+1)\cdots \psi(N)]$.

Therefore, by Lemma~\ref{lma:dmExist-V2} with $V_{N,m}=W_{N,m}'$,
\begin{align*}
    |Z_{\ell,N,r}| &\leq |W_{1,1}' + \cdots + W_{1,k}'|\cdots |W_{\ell,1}' + \cdots + W_{\ell,k}'|\,|[r\psi(\ell+1)\cdots \psi(N)]|\\
    &=\prod_{i=1}^\ell \prod_{m=1}^k \big(r(\tau_{i,m}-1) +1\big) (r\psi(\ell+1)\cdots \psi(N)).
\end{align*}
The inequality above comes exclusively from saying the set of possible $z^{(\ell+1)}$ is in $[kr\psi(\ell+1)\cdots \psi(N)]$, which is the second potentially lossy step. \textbf{We do not lose any information with the terms} $|W_{i,1}' + \cdots + W_{i,k}'|$, $1\leq i\leq\ell$, because we compute them explicitly. For $z\in Z_{\ell,N,r}$, define
\begin{equation*}
    g(z) = \Bigg|\Bigg\{(a_{1,1},\ldots,a_{1,r},\cdots,a_{k,1},\ldots,a_{k,r})\in \prod_{m=1}^{k} (\mathcal{A}_{\ell,N,m})^{r}: \sum_{n=1}^r\sum_{m=1}^{k} a_{n,m}=z\Bigg\}\Bigg|.
\end{equation*}
Then $\|g\|_{\ell^1(Z_{\ell,N,r})} = \prod_{m=1}^k |\mathcal{A}_{\ell,N,m}|^r = \prod_{m=1}^k (\tau_{1,m}\cdots\tau_{\ell,m} \,t_{\ell+1,m}\cdots t_{N,m})^{r}$ and
\begin{align*}
   &\displaystyle\|g\|_{\ell^2}^2 \cr
   &= \sum_{z}\left|\left\{(a_{1,1},\ldots,a_{1,r},\cdots,a_{k,1},\ldots,a_{k,r})\in \prod_{m=1}^{k} (\mathcal{A}_{\ell,N,m})^{r}: \sum_{n=1}^r\sum_{m=1}^{k} a_{n,m}=z\right\}\right|^{2} \cr
   &=\sum_{z}\left|\left\{(a_{1,1},\ldots,a_{1,2r},\cdots,a_{k,1},\ldots,a_{k,2r})\in \prod_{m=1}^{k} (\mathcal{A}_{\ell,N,m})^{2r}: \sum_{n=1}^r\sum_{m=1}^{k} a_{n,m}=\sum_{n=1}^r\sum_{m=1}^{k}a_{n+r,m}=z\right\}\right| \cr
   &= M_{\ell,N,r}.
\end{align*}

This way, by Cauchy--Schwarz (which is the third potentially lossy step),
\begin{equation*}
    M_{\ell,N,r} \geq \frac{\|g\|_{\ell^1(Z_{\ell,N,r})}^2}{|Z_{\ell,N,r}|} \geq \frac{\prod_{m=1}^k (\tau_{1,m}\cdots\tau_{\ell,m} \,t_{\ell+1,m}\cdots t_{N,m})^{2r}}{\prod_{i=1}^\ell \prod_{m=1}^k \big(r(\tau_{i,m}-1) +1\big) (r\psi(\ell+1)\cdots \psi(N))}.
\end{equation*}
Expanding $\prod_{i=1}^\ell \prod_{m=1}^k \big(r(\tau_{i,m}-1) +1\big)$,
\begin{equation*}
    \prod_{i=1}^\ell \prod_{m=1}^k \big(r(\tau_{i,m}-1) +1\big) = r^{k\ell} \prod_{i=1}^\ell \prod_{m=1}^k \tau_{i,m} + L_{<k},
\end{equation*}
where $L_{<k}$ is a lower order term given by a linear combination of products of less than $k$ factors of the form $r\tau_{i,m}$ and $r$. Thus,
\begin{equation*}
    M_{\ell,N,r} \geq \frac{\prod_{m=1}^k (\tau_{1,m}\cdots\tau_{\ell,m} \,t_{\ell+1,m}\cdots t_{N,m})^{2r}}{r^{k\ell} \prod_{i=1}^\ell \prod_{m=1}^k \tau_{i,m} + L_{<k}} \frac{\Psi(\ell)}{r\Psi(N)}.
\end{equation*}
Therefore, by \eqref{ineq1-220126},
\begin{align*}
    &\Bigg\|\prod_{m=1}^{k}\widehat{f_{\ell,m} \d\mu_{N,m}}\Bigg\|_{L^{2r}(\mathbb{R})}^{2r}\\
    &\geq C(2kr)\Psi(N)\left(\prod_{m=1}^{k}\frac{1}{t_{1,m}\cdots t_{N,m}}\right)^{2r}\frac{\prod_{m=1}^k (\tau_{1,m}\cdots\tau_{\ell,m} \,t_{\ell+1,m}\cdots t_{N,m})^{2r}}{r^{k\ell} \prod_{i=1}^\ell \prod_{m=1}^k \tau_{i,m} + L_{<k}} \frac{\Psi(\ell)}{r\Psi(N)} \\
    &=  C(2kr)\Psi(\ell)\frac{\prod_{i=1}^\ell\prod_{m=1}^k (\tau_{i,m}^{2r}t_{i,m}^{-2r})}{r^{k\ell+1} \prod_{i=1}^\ell \prod_{m=1}^k \tau_{i,m} + rL_{<k}}.
\end{align*}
From \eqref{eq:LqLrnorms}, for $1\leq q\leq 2r$,
\begin{align*}
    &\Bigg\|\prod_{m=1}^{k}\widehat{f_{\ell,m} \d\mu_{N,m}}\Bigg\|_{L^{q}(\mathbb{R})}^{q} \gtrsim \prod_{i=1}^\ell \prod_{m=1}^k \Big(\frac{t_{i,m}}{\tau_{i,m}}\Big)^{2r-q}  \Bigg\|\prod_{m=1}^{k}\widehat{f_{\ell,m} \d\mu_{m}}\Bigg\|_{L^{2r}(\mathbb{R})}^{2r} \\
    &\geq C(2kr)\Psi(\ell) \prod_{i=1}^\ell \prod_{m=1}^k \Big(\frac{t_{i,m}}{\tau_{i,m}}\Big)^{2r-q}  \frac{\prod_{i=1}^\ell\prod_{m=1}^k (\tau_{i,m}^{2r}t_{i,m}^{-2r})}{r^{k\ell+1} \prod_{i=1}^\ell \prod_{m=1}^k \tau_{i,m} + rL_{<k}}.
\end{align*}
We conclude that
\begin{align*}
    \frac{\bigg\|\prod_{m=1}^{k}\widehat{f_{\ell,m} \d\mu_{N,m}}\bigg\|_{L^{q}(\mathbb{R})}^{q}}{\prod_{m=1}^k \|f_{\ell,m}\|_{L^{p}(\d\mu_{N,m})}^q} &\geq C(2kr)\Psi(\ell) \prod_{i=1}^\ell \prod_{m=1}^k \Big(\frac{t_{i,m}}{\tau_{i,m}}\Big)^{\frac{q}{p}-q } \Big( r^{k\ell+1} \prod_{i=1}^\ell \prod_{m=1}^k \tau_{i,m} + rL_{<k} \Big)^{-1} \\
    &= \frac{C(2kr) \Psi(\ell)}{r^{k\ell+1}\prod_{i=1}^\ell \prod_{m=1}^k (\tau_{i,m}^{1+\frac{q}{p}-q} t_{i,m}^{q-\frac{q}{p}}) + rL_{<k} \prod_{i=1}^\ell \prod_{m=1}^k  (\tau_{i,m}^{\frac{q}{p}-q}t_{i,m}^{q-\frac{q}{p}})}.
\end{align*}

Using the choices made in Section~\ref{sec:choices} yields
\begin{equation}\label{eq:limit}
    \displaystyle\frac{\bigg\| \prod_{m=1}^{k}\widehat{f_{\ell,m} \d\mu_{N,m}}\bigg\|_{L^{q}(\mathbb{R})}^{q}}{\prod_{m=1}^k \|f_{\ell,m}\|^q_{L^{p}(\d\mu_{N,m})}}\gtrsim \Gamma(\ell)^{-1}, 
\end{equation}
where
\begin{equation*}
\eqalign{
   \displaystyle \Gamma(\ell):=&\displaystyle r^{k \ell+1}\Psi(\ell)^{-1+\sum_{m=1}^k \alpha_m - \frac{1}{2}\big( 1+ \frac{q}{p}-q \big)\sum_{m=1}^k \beta_m} \psi(\ell+1)^{\sum_{m=1}^k\alpha_m}\log^k(8\Psi(\ell+1)) \cr
   &\qquad\displaystyle + r\Psi(\ell)^{-1}L_{<k} \prod_{i=1}^\ell \prod_{m=1}^k  (\tau_{i,m}^{\frac{q}{p}-q}t_{i,m}^{q-\frac{q}{p}}). \cr
    }
\end{equation*}
Because of our choices of $\psi$ and $\Psi$, the right-hand side of \eqref{eq:limit} diverges as $\ell\to\infty$ if 
\begin{equation*}
    1 - \sum_{m=1}^k \alpha_m + \frac{1}{2}\sum_{m=1}^k \beta_m \geq \frac{q}{2} \Big( 1 - \frac{1}{p}\Big) \sum_{m=1}^k \beta_m,
\end{equation*}
which results in $q < \frac{2p(1-\sum_{m=1}^k\alpha_m) + p\sum_{m=1}^k\beta_m}{(p-1)\sum_{m=1}^{k}\beta_m}$, as we wanted to show.
\end{proof}

\subsection{Second construction of Cantor sets}
As in the previous sections, the construction below will be a particular example of the one in \cite{HL}, with the additional step of considering $M$-LI APs, which can be thought of as a manifestation of transversality between the measures.

Let $N_0$ be an integer. For each $m=1,\ldots,k$ let $1<t_{0,m}< N_0$ be an integer such that for $\alpha_m= \log t_{0,m}/\log N_0$, $(k-1)\alpha_1 + \alpha_k<2$. Let $N = N_0^{2n_0}$, $t_m = t_{0,m}^{2n_0}$ for some $n_0$ large enough to be chosen later.

For each $m=1,\ldots,k$, consider a sequence of sets $\{A_{j,m}\}_{j\in\mathbb{N}_0}$ such that
\begin{align*}
    A_{0,m} &= \{0\},\\
    A_{j+1,a,m} &\subseteq N^{-(j+1)} [N],\\
    A_{j+1,m} &= \bigcup_{a\in A_{j,m}} ( a + A_{j+1,a,m}),\\
    |A_{j+1,a,m}|&=t_m.
\end{align*}
Define also
\begin{equation*}
   \displaystyle E_{j,m} = \bigcup_{a\in A_{j,m}} a + [0,N^{-j}], \quad E_m = \bigcap_{j=1}^\infty E_{j,m}.
\end{equation*}
$E_m$ is a Cantor-type set with a natural probability measure $\mu_m$ defined as the weak limit of 
$$\frac{\d\mu_{j,m}}{\d x} = \sum_{a\in A_{j,m}} N^{-j\alpha_m}N^j1_{[a,a+N^{-j}]}.$$

By a standard argument (see Lemma 6.1 of \cite{LM}), the sets $E_m$ defined above satisfy $\hd E_m = \alpha_m$ and each probability measure $\mu_m$ satisfies
\begin{equation*}
    \mu_m\big( B(x,r)\big) \lesssim_{n_0} r^{\alpha_m}
\end{equation*}
for any $x\in E_m$ and $r>0$.

We now continue with the construction by identifying the arithmetically structured sets $F_{j,m}\subseteq E_{j,m}$. Since these will be given by $M$-LI APs, we need the following lemma that plays the same role as Lemma~\ref{lma:dmExist-V2}.
\begin{lemma}\label{lma:dmExist} Let $0<\alpha_k<\cdots < \alpha_1<1$,  with $\alpha_{m}=\frac{\log{t_{0,m}}}{\log{N_{0}}}$ for some integers $t_{0,m}$ and $N_{0}$, $t_{0,m}\leq N_{0}$ for each $1\leq m\leq k$, and $(k-1)\alpha_1 + \alpha_k <2$. Then
\begin{enumerate}
    \item There exist $n_0\in\mathbb{N}$ and a set $\{d_m\}_{m=1}^k$ satisfying
    \begin{equation}\label{cond-dm}
        d_m \leq N_0^{2n_0\big(1-\frac{\alpha_m}{2}\big)}
    \end{equation}
$m=1,\ldots,k$, that is $\Big(\left\lceil\frac{1}{\sum_{m=1}^k \alpha_m}\right\rceil (N_0^{n_0\alpha_1}-1)+1\Big)$-LI.
\item For any $a_{i}\in\mathbb{Z}$, the arithmetic progressions 
\begin{equation*}
    V_m= \left\{a_{m}, a_{m} + d_m, \cdots, a_{m} + \left\lceil\tfrac{1}{\sum_{m=1}^{k}\alpha_m}\right\rceil( N_0^{n_0\alpha_1}-1)d_m\right\}
\end{equation*}
$m=1,\ldots,k$, satisfy 
\begin{equation}\label{cond-sum}
    |V_1+\ldots +V_{k}|=\prod_{m=1}^{k}|V_{m}|.
\end{equation}
\end{enumerate}
\end{lemma}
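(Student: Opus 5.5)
We mirror the proof of Lemma~\ref{lma:dmExist-V2}: first construct the $d_m$ by the greedy recursion of Lemma~\ref{lemma-LI-V2}, then check the size constraint \eqref{cond-dm}, and finally deduce \eqref{cond-sum} from the $M$-LI property.

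\textbf{Construction of the $d_m$.} Put $r=\lceil 1/\sum_{m=1}^k\alpha_m\rceil$ and $M=r(N_0^{n_0\alpha_1}-1)+1$. Define
\[
d_1=2,\qquad d_m=M\sum_{i<m}d_i+1 .
\]
Lemma~\ref{lemma-LI-V2} then gives that $\{d_m\}_{m=1}^k$ is $M$-LI. Solving the recursion as in Lemma~\ref{lma:dmExist-V2} gives $d_m=(2M+1)(M+1)^{m-2}$ for $m\ge 2$, so $d_m\lesssim_{k,r} M^{m-1}\lesssim N_0^{(m-1)n_0\alpha_1}$.

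\textbf{Size constraint \eqref{cond-dm}.} It suffices to have $(m-1)n_0\alpha_1<2n_0(1-\alpha_m/2)$ with enough room to absorb the constants $\lesssim_{k,r}$. This is the inequality $(m-1)\alpha_1+\alpha_m<2$ for $m=1,\dots,k$.
\begin{itemize}
\item For $m=1$ it reduces to $\alpha_1<2$, which holds.
\item For $m=k$ it is exactly the hypothesis $(k-1)\alpha_1+\alpha_k<2$.
\item For intermediate $m$ we have $(m-1)\alpha_1+\alpha_m\le (m-1)\alpha_1+\alpha_1\le (k-1)\alpha_1+\alpha_k$. This uses $\alpha_1\le(k-m)\alpha_1+\alpha_k$, which holds whenever $m<k$.
\end{itemize}
Since each inequality is strict, there is some gap $\eta>0$ with $(m-1)\alpha_1+\alpha_m\le 2-\eta$ for all $m$. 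Choosing $n_0$ large enough that $C_{k,r}N_0^{-n_0\eta}\le 1$ then yields \eqref{cond-dm}.

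The main obstacle is making these constants explicit. One must check that the factor $2M+1\le 3M$ and the powers $(M+1)^{m-2}$ only cost a constant factor times $N_0^{(m-1)n_0\alpha_1}$. This holds because $M\le rN_0^{n_0\alpha_1}$. Note also that $N_0^{n_0\alpha_1}=t_{0,1}^{n_0}$ is an integer, so $M$ is well defined.

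\textbf{Property \eqref{cond-sum}.} The proof is verbatim that of Lemma~\ref{lma:dmExist-V2}(2). Suppose
\[
\sum_m (a_m+\ell_m d_m)=\sum_m (a_m+\tilde\ell_m d_m)\qquad\text{with } \ell_m,\tilde\ell_m\in\{0,\dots,M-1\}.
\]
Then the offsets $a_m$ cancel, leaving $\sum_m(\ell_m-\tilde\ell_m)d_m=0$ with $\ell_m-\tilde\ell_m\in(-M,M)$. The $M$-LI property forces $\ell_m=\tilde\ell_m$ for every $m$. Hence the map $V_1\times\cdots\times V_k\to V_1+\cdots+V_k$ is injective, which gives \eqref{cond-sum}.
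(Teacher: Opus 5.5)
Your proof is correct and follows the route the paper intends. The paper states Lemma~\ref{lma:dmExist} without proof as the analogue of Lemma~\ref{lma:dmExist-V2}, and that proof uses the same ingredients: the recursion $d_1=2$, $d_m=M\sum_{i<m}d_i+1$; Lemma~\ref{lemma-LI-V2} for the $M$-LI property; an exponent comparison for the size bound; and injectivity of $V_1\times\cdots\times V_k\to V_1+\cdots+V_k$ for \eqref{cond-sum}. Your check that $(m-1)\alpha_1+\alpha_m<2$ holds for every $m$ given $(k-1)\alpha_1+\alpha_k<2$, with $n_0$ taken large to absorb constants depending only on $k$ and $r$, settles the one step that needs verifying in this setting.
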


Let $\{d_m\}_{m=1}^k$ be the $\Big(\left\lceil\frac{1}{\sum_{m=1}^k \alpha_m}\right\rceil (N_0^{n_0\alpha_1}-1)+1\Big)$-LI set given by Lemma~\ref{lma:dmExist}. For each $m=1,\ldots,k$ let $P_m\subseteq [N]$ be an arithmetic progression of length $t_m^{1/2}=N^{\alpha_m/2}$ and ratio $d_m$, that is
\begin{equation*}
    P_m = \{x_{m}, x_{m} + d_{m}, x_{m} + 2d_{m}, \ldots, x_{m} + (N^{\frac{\alpha_m}{2}}-1)d_{m}\} 
\end{equation*}
for some $x_{m}\in [N]$. Define the sequences of sets $P_{j,m}\subseteq A_{j,m}$, $F_{j,m}\subseteq E_{j,m}$ as
\begin{align*}
    P_{0,m}&=\{0\},\\
    P_{j+1,m} &= \bigcup_{a\in P_{j,m}} (a + N^{-(j+1)}P_m),\\
    F_{j,m} &= \bigcup_{a\in P_{j,m}} a + [0,N^{-j}).
\end{align*}
Note that $F_{j,m}\subseteq E_{j,m}$ are smaller Cantor-type sets with endpoints in a rescaled arithmetic progression. 

We now restate Theorem~\ref{HL-nec-sing} in greater detail. Its proof is a routine modification of the proof of Theorem~\ref{nec-sing}.
\begin{restatetheorem}{HL-nec-sing} For each $m=1,\ldots,k$ let $0<\alpha_k<\cdots <\alpha_{1}<1$ such that $\alpha_m = \frac{\log t_{0,m}}{\log N_0}$ for some $t_{0,m},N_0\in\mathbb{N}$, and $(k-1)\alpha_1 + \alpha_k <2$. Let $\mu_m$ be the measures defined above, where the corresponding arithmetic progressions $\{P_{m}\}_{m=1}^k$ are $\Big(\left\lceil\frac{1}{\sum_{m=1}^k \alpha_m}\right\rceil (N_0^{n_0\alpha_1}-1)+1\Big)$-LI. Let $\{f_{\ell,m}\}_{\ell\in\mathbb{N}} = \{1_{F_{\ell,m}}\}_{\ell\in\mathbb{N}}$. Assume that $p,q\in[1,\infty]$ satisfy
    \begin{equation*}
        q < \frac{p(2-\sum_{m=1}^k\alpha_m)}{(p-1)\sum_{m=1}^{k}\alpha_m}.
    \end{equation*}
    Then
    \begin{equation*}
    \frac{\bigg\|\prod_{m=1}^{k} \reallywidehat{f_{\ell,m}\d\mu_m}\bigg\|_{L^q(\mathbb{R})}}{\prod_{m=1}^{k}\|f_{\ell,m}\|_{L^p(\d\mu_m)}} \to\infty \text{ as }\ell\to\infty.
    \end{equation*}
\end{restatetheorem}

\end{document}